\theoremstyle{plain}
\newtheorem{thm}{Theorem}
\newtheorem{lem}[thm]{Lemma}
\newtheorem{prop}[thm]{Proposition}
\newtheorem{cor}[thm]{Corollary}
\newtheorem{remark}[thm]{Remark}
\theoremstyle{definition}
\newtheorem{definition}[thm]{Definition}
\newtheorem{exl}[thm]{Example}
\numberwithin{thm}{section}
\newcommand{\adj}{\leftrightarrow}
\newcommand{\adjeq}{\leftrightarroweq}
\DeclareMathOperator{\id}{id}
\def\Z{{\mathbb Z}}
\def\N{{\mathbb N}}
\def\R{{\mathbb R}}
\begin{document}
\title{Hyperspaces and Function Graphs in Digital Topology}
\author{Laurence Boxer
\thanks{
    Department of Computer and Information Sciences,
    Niagara University,
    Niagara University, NY 14109, USA;
    and Department of Computer Science and Engineering,
    State University of New York at Buffalo.
    email: boxer@niagara.edu
}
}

\date{ }
\maketitle{}

\begin{abstract}
We adapt the study of hyperspaces and 
function spaces from classical topology
to digital topology. We define digital hyperspaces and digital
function graphs, and
study some of their relationships and graphical properties.

Key words and phrases: digital topology, digital image, Hausdorff metric, hyperspace, function space

MSC: 54B20, 54C35
\end{abstract}


\section{Introduction}
Classical topology has a large literature devoted to the
study of hyperspaces, in which a topology is induced on
some set of subsets of a given topological space. By the time
of the publication of~\cite{Nadler}, hundreds of papers had
been published on hyperspaces, and many more 
have appeared subsequently. Typically, the topology of a
hyperspace is induced by using the Hausdorff metric, which
essentially measures how two objects approximate each
other with respect to position. The Hausdorff metric 
can be computed efficiently~\cite{Shon89,BxMi}
and has been used by some students of digital image
processing as a crude measure of whether two images might
represent the same real-world object. Other metrics have
been developed in order to compare objects with respect to
topological or geometric 
properties~\cite{Borsuk54, Borsuk77, Cerin79, Cerin80, Boxer83, Boxer93}. Variations on the Hausdorff metric were introduced
in~\cite{CR96,Bx97,Vergili20,BxBeyond}

Classical topology also has a large literature on 
{\em function spaces}, in which the set of functions
\[ Y^X = \{f: X \to Y ~|~ f \mbox{ is continuous} \}
\]
between topological spaces, or some interesting subset of $Y^X$,
is considered as a topological
space whose topology is determined from those of $X$ and $Y$; see,
e.g., \cite{Borsuk67a,Wagner71,Boxer80,Boxer81,Boxer83b}.

In the current paper, we develop notions of hyperspaces and
function graphs (the latter, an analog of function spaces) for
digital topology. The paper is organized as follows.
\begin{itemize}
    \item Section~\ref{prelimSec} reviews basics of digital topology.
    \item In section~\ref{hyperspaceSec}, we introduce the adjacency
that we use to form a hyperspace of digital images.
\item Section~\ref{cardSec} has elementary observations on the
cardinalities of digital hyperspaces.
\item In section~\ref{mapSec} we discuss certain digitally
      continuous functions on hyperspaces. In 
      section~\ref{functGraphSubsec}, we introduce the concept
      of a function graph as a digital analog of a function space.
      Classical topology studies relations between hyperspaces and
      function spaces, e.g., \cite{Boxer80,Boxer81,Boxer83b}; 
      in section~\ref{functGraphSubsec} and later in the paper,
      we study relations between digital hyperspaces and 
      function graphs.
\item In section~\ref{connectSec}
we study connectedness properties of digital hyperspaces.
\item In section~\ref{multivaluedSec} we consider various
      notions of continuous multivalued functions in
      digital topology and their relations with
      digital hyperspaces.
\item In section~\ref{cycleSec} we obtain results concerning cycles and Girth in digital hyperspaces. 
\item In sections~\ref{dominateSec} and~\ref{diamSec}, we study, 
respectively, dominating sets and diameters 
of digital hyperspaces. 
\item We give some concluding remarks
in section~\ref{concludeSec}.
\end{itemize}

\section{Preliminaries}
\label{prelimSec}
Much of this section is quoted or paraphrased 
from~\cite{Boxer99,BxBeyond}.

We use $\N$ to indicate the set of natural numbers, $\Z$ 
for the set of integers, and $\R$ for the set of 
real numbers. We use~$\#X$ for the number of points in
a set~$X$.

\subsection{Adjacencies}
A digital image is a graph $(X,\kappa)$, where $X$ is a 
nonempty subset of $\Z^n$ for
some positive integer~$n$, and $\kappa$ is an 
adjacency relation for the points
of~$X$. The $c_u$-adjacencies are commonly used.
Let $x,y \in \Z^n$, $x \neq y$, where we consider these points as $n$-tuples of integers:
\[ x=(x_1,\ldots, x_n),~~~y=(y_1,\ldots,y_n).
\]
Let $u \in \N$,
$1 \leq u \leq n$. We say $x$ and $y$ are 
{\em $c_u$-adjacent} if
\begin{itemize}
\item There are at most $u$ indices $i$ for which 
      $|x_i - y_i| = 1$.
\item For all indices $j$ such that $|x_j - y_j| \neq 1$ we
      have $x_j=y_j$.
\end{itemize}
Often, a $c_u$-adjacency is denoted by the number of points
adjacent to a given point in $\Z^n$ using this adjacency.
E.g.,
\begin{itemize}
\item In $\Z^1$, $c_1$-adjacency is 2-adjacency.
\item In $\Z^2$, $c_1$-adjacency is 4-adjacency and
      $c_2$-adjacency is 8-adjacency.
\item In $\Z^3$, $c_1$-adjacency is 6-adjacency,
      $c_2$-adjacency is 18-adjacency, and $c_3$-adjacency
      is 26-adjacency.
\end{itemize}

We write $x \adj_{\kappa} x'$, or $x \adj x'$ when $\kappa$ is understood, to indicate
that $x$ and $x'$ are $\kappa$-adjacent. Similarly, we
write $x \adjeq_{\kappa} x'$, or $x \adjeq x'$ when $\kappa$ is understood, to indicate
that $x$ and $x'$ are $\kappa$-adjacent or equal.

A sequence $P=\{y_i\}_{i=0}^m$ in a digital image $(X,\kappa)$ is
a {\em $\kappa$-path from $a \in X$ to $b \in X$} if
$a=y_0$, $b=y_m$, and $y_i \adjeq_{\kappa} y_{i+1}$ 
for $0 \leq i < m$.

$Y \subset X$ is
{\em $\kappa$-connected}~\cite{Rosenfeld},
or {\em connected} when $\kappa$
is understood, if for every pair of points $a,b \in Y$ there
exists a $\kappa$-path in $Y$ from $a$ to $b$.

Let $N(X,x, \kappa)$ be the set
\[ N(X,x, \kappa) = \{y \in X ~|~ x \adj_{\kappa} y \}.
\]

\subsection{Digitally continuous functions}
In a metric space, the continuity of $f: X \to Y$
is defined to preserve the intuition that if
$x_0$ and $x_1$ are sufficiently close, then
$f(x_0)$ and $f(x_1)$ are close; i.e., ``closeness,"
and therefore connectivity, are preserved by a continuous 
function. Digital continuity is defined
to preserve connectedness, as at
Definition~\ref{continuous} below. By
using adjacency as our standard of ``closeness," we
get Theorem~\ref{continuityPreserveAdj} below.

\begin{definition}
\label{continuous}
{\rm ~\cite{Boxer99} (generalizing a definition of~\cite{Rosenfeld})}
Let $(X,\kappa)$ and $(Y,\lambda)$ be digital images.
A function $f: X \rightarrow Y$ is 
$(\kappa,\lambda)$-continuous if for
every $\kappa$-connected $A \subset X$ we have that
$f(A)$ is a $\lambda$-connected subset of $Y$.
\end{definition}

If $Y \subset X$, we use the abbreviation
$\kappa$-continuous for $(\kappa,\kappa)$-continuous.

When the adjacency relations are understood, we will simply say that $f$ is \emph{continuous}. Continuity can be expressed in terms of adjacency of points:

\begin{thm}
{\rm ~\cite{Rosenfeld,Boxer99}}
\label{continuityPreserveAdj}
A function $f:X\to Y$ is continuous if and only if $x \adj x'$ in $X$ implies 
$f(x) \adjeq f(x')$.
\end{thm}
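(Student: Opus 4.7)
The plan is to prove each direction directly from Definition~\ref{continuous}, using the characterization of $\kappa$-connectedness by $\kappa$-paths. Both directions are short, and the main thing to keep careful track of is the distinction between $\adj_\kappa$ (strict adjacency) and $\adjeq_\kappa$ (adjacent or equal), since paths allow repeated consecutive vertices.

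For the forward direction, I would assume $f$ is $(\kappa,\lambda)$-continuous and that $x \adj_\kappa x'$ in $X$. The two-point set $A = \{x,x'\}$ is $\kappa$-connected, because the sequence $x,x'$ is a $\kappa$-path from $x$ to $x'$ in $A$. By continuity, $f(A) = \{f(x),f(x')\}$ is $\lambda$-connected. If $f(x) = f(x')$ then $f(x) \adjeq_\lambda f(x')$ trivially; otherwise $f(A)$ is a two-point connected set, so there must be a $\lambda$-path between its two points, forcing $f(x) \adj_\lambda f(x')$. Either way $f(x) \adjeq_\lambda f(x')$.

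For the converse, assume that $x \adj_\kappa x'$ implies $f(x) \adjeq_\lambda f(x')$, and let $A \subset X$ be $\kappa$-connected. To show $f(A)$ is $\lambda$-connected, pick arbitrary $b,b' \in f(A)$, choose preimages $a,a' \in A$ with $f(a)=b$, $f(a')=b'$, and take a $\kappa$-path $a = y_0, y_1, \ldots, y_m = a'$ in $A$. For each $i$, we have $y_i \adjeq_\kappa y_{i+1}$, which gives either $y_i = y_{i+1}$ (so $f(y_i) = f(y_{i+1})$) or $y_i \adj_\kappa y_{i+1}$ (so by hypothesis $f(y_i) \adjeq_\lambda f(y_{i+1})$); in both cases $f(y_i) \adjeq_\lambda f(y_{i+1})$. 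Hence $f(y_0), \ldots, f(y_m)$ is a $\lambda$-path in $f(A)$ from $b$ to $b'$, proving $f(A)$ is $\lambda$-connected.

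There is no real obstacle in this argument; the only subtlety, as noted above, is remembering that a $\kappa$-path permits $y_i = y_{i+1}$, so the hypothesis only applied to strict adjacency $x \adj_\kappa x'$ still suffices to promote an entire $\kappa$-path to a $\lambda$-path under $f$.
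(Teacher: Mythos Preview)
Your argument is correct. Note, however, that the paper does not actually supply a proof of this theorem; it is quoted from~\cite{Rosenfeld,Boxer99} and stated without proof as background material. So there is no in-paper proof to compare against. Your argument is the standard one: apply Definition~\ref{continuous} to the two-point connected set $\{x,x'\}$ for the forward direction, and push a $\kappa$-path through $f$ for the converse. There are no gaps.
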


See also~\cite{Chen94,Chen04}, where similar notions are referred to as {\em immersions}, {\em gradually varied operators},
and {\em gradually varied mappings}.

\begin{prop}
{\rm \cite{Boxer99}}
\label{compoPreservesContin}
If $f: (X,\kappa) \to (Y,\lambda)$ and
$g: (Y,\lambda) \to (W,\mu)$ are continuous maps between
digital images, then $g \circ f: X \to W$ is
$(\kappa,\mu)$-continuous.
\end{prop}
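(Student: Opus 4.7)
The plan is to prove this directly from Definition~\ref{continuous}, since continuity is defined precisely in terms of preservation of connectedness, and connectedness is preserved under composition of such maps in the obvious way.

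First, I would let $A \subset X$ be an arbitrary $\kappa$-connected subset. By the $(\kappa,\lambda)$-continuity of $f$, applied to $A$, the image $f(A) \subset Y$ is $\lambda$-connected. Next, I would apply the $(\lambda,\mu)$-continuity of $g$ to the $\lambda$-connected set $f(A)$ to conclude that $g(f(A)) \subset W$ is $\mu$-connected. Since $(g \circ f)(A) = g(f(A))$, this shows that $g \circ f$ carries $\kappa$-connected subsets of $X$ to $\mu$-connected subsets of $W$, which is exactly $(\kappa,\mu)$-continuity.

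As an alternative, one can argue via Theorem~\ref{continuityPreserveAdj}. Assume $x \adj_{\kappa} x'$ in $X$. By continuity of $f$, $f(x) \adjeq_{\lambda} f(x')$, so either $f(x) = f(x')$ or $f(x) \adj_{\lambda} f(x')$. In the first case $g(f(x)) = g(f(x'))$, and in the second case continuity of $g$ yields $g(f(x)) \adjeq_{\mu} g(f(x'))$. In either case $(g\circ f)(x) \adjeq_{\mu} (g\circ f)(x')$, and Theorem~\ref{continuityPreserveAdj} gives $(\kappa,\mu)$-continuity of $g\circ f$.

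There is no real obstacle here; the statement is essentially formal. The only minor subtlety in the second approach is remembering to handle the case $f(x) = f(x')$ separately, since continuity only guarantees $\adjeq$ rather than $\adj$ on images of adjacent points. The first approach avoids this entirely and is the cleaner route.
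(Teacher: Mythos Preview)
Your proposal is correct; both arguments you give are valid and complete. Note, however, that the paper does not actually supply its own proof of this proposition---it is stated as a cited result from~\cite{Boxer99} with no accompanying proof environment---so there is nothing in the present paper to compare against. Your first argument, directly from Definition~\ref{continuous}, is the most natural one and matches the spirit of how such facts are typically established in the cited literature.
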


\begin{remark}
\label{pathRem}
Notice $P$ is a $\kappa$-path if and only if there
is a $(c_1, \kappa)$-continuous function
$p: [0,n]_{\Z} \to X$ such that $p([0,n]_{\Z})=P$.
It is therefore common to call such a
function a {\em $\kappa$-path}.
\end{remark}

To express the idea of following one path and then another,
the {\em product} or {\em concatenation} of paths is
defined as follows.

\begin{definition}
{\rm \cite{Khalimsky}}
Let $p_1: [0,m]_{\Z} \to X$ and $p_2: [0,n]_{\Z} \to X$
be $\kappa$-paths such that $p_1(m) = p_2(0)$.
The {\em product} or {\em concatenation} of these paths is
the function $p_1 \cdot p_2: [0,m+n]_{\Z} \to X$ given by
\[ (p_1 \cdot p_2)(t) = \left \{ \begin{array}{ll}
    p_1(t) & \mbox{if } 0 \le t \le m;  \\
    p_2(t-m) & \mbox{if } m \le t \le m+n.
\end{array}
\right .
\]
\end{definition}

\begin{lem}
{\rm \cite{Boxer94}}
The concatenation of paths is associative, i.e.,
\[ (p_1 \cdot p_2) \cdot p_3 = p_1 \cdot (p_2 \cdot p_3).
\]
\end{lem}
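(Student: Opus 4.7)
The plan is a direct pointwise computation, since both sides are integer-domain functions and equality reduces to agreement at each integer of the common domain.

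First I would verify that both sides have the same domain. Since $p_1 \cdot p_2$ is a $\kappa$-path on $[0,m+n]_{\Z}$, the left side $(p_1 \cdot p_2) \cdot p_3$ is defined on $[0, (m+n)+k]_{\Z}$. Dually, $p_2 \cdot p_3$ is defined on $[0, n+k]_{\Z}$, so $p_1 \cdot (p_2 \cdot p_3)$ is defined on $[0, m+(n+k)]_{\Z}$. These domains coincide. I should also note in passing that the hypotheses $p_1(m) = p_2(0)$ and $p_2(n) = p_3(0)$ are inherited by the intermediate paths, so the two outer concatenations are well defined.

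Next I would split the common domain $[0, m+n+k]_{\Z}$ into three overlapping pieces determined by the inner piecewise boundaries, namely $0 \le t \le m$, $m \le t \le m+n$, and $m+n \le t \le m+n+k$. In each piece I unfold the definition twice and compare. For example, in the middle range $m \le t \le m+n$ the left side gives $((p_1 \cdot p_2) \cdot p_3)(t) = (p_1 \cdot p_2)(t) = p_2(t-m)$, while the right side gives $(p_1 \cdot (p_2 \cdot p_3))(t) = (p_2 \cdot p_3)(t-m) = p_2(t-m)$, using that $0 \le t-m \le n$. The analogous computations in the other two ranges yield $p_1(t)$ and $p_3(t-m-n)$ on both sides.

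The only subtle points are the overlaps at $t = m$ and $t = m+n$, where two clauses of a piecewise definition apply simultaneously; but the agreement conditions $p_1(m) = p_2(0)$ and $p_2(n) = p_3(0)$ make both clauses yield the same value, so no inconsistency arises. Aside from careful bookkeeping of the case boundaries, there is no real obstacle; the argument is a routine verification.
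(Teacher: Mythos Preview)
Your proposal is correct; the pointwise case split on $[0,m+n+k]_{\Z}$ is exactly the standard verification, and your handling of the overlap points $t=m$ and $t=m+n$ via the compatibility conditions is the right observation. The paper itself gives no proof for this lemma, simply citing~\cite{Boxer94}, so there is nothing to compare against here.
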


Let $Y \subset X$. A $\kappa$-continuous function $r: X \to Y$
is a {\em retraction}, and $Y$ is a {\em $\kappa$-retract of $X$},
if $r|_Y = \id_Y$.

A homotopy between continuous functions may be thought of as
a continuous deformation of one of the functions into the 
other over a finite time period.

\begin{definition}{\rm (\cite{Boxer99}; see also \cite{Khalimsky})}
\label{htpy-2nd-def}
Let $X$ and $Y$ be digital images.
Let $f,g: X \rightarrow Y$ be $(\kappa,\lambda)$-continuous functions.
Suppose there is a positive integer $m$ and a function
$F: X \times [0,m]_{{\Z}} \rightarrow Y$
such that
\begin{itemize}
\item for all $x \in X$, $F(x,0) = f(x)$ and $F(x,m) = g(x)$;
\item for all $x \in X$, the induced function
      $F_x: [0,m]_{{\Z}} \rightarrow Y$ defined by
          \[ F_x(t) ~=~ F(x,t) \mbox{ for all } t \in [0,m]_{{\Z}} \]
          is $(2,\lambda)-$continuous. That is, $F_x(t)$ is a path in $Y$.
\item for all $t \in [0,m]_{{\Z}}$, the induced function
         $F_t: X \rightarrow Y$ defined by
          \[ F_t(x) ~=~ F(x,t) \mbox{ for all } x \in  X \]
          is $(\kappa,\lambda)-$continuous.
\end{itemize}
Then $F$ is a {\rm digital $(\kappa,\lambda)-$homotopy between}
$f$ and $g$, and $f$ and $g$ are {\rm digitally 
$(\kappa,\lambda)-$homotopic in} $Y$, denoted
$f \sim_{\kappa,\lambda} g$.

If for some $x_0 \in X$ and $y_0 \in Y$
we have $F(x_0,t)=F(x_0,0) = y_0 \in Y$ for all
$t \in [0,m]_{{\Z}}$, we say $F$ {\rm holds $x$ fixed},
$F$ is a {\rm pointed homotopy}, and $x_0$ and $y_0$ are
{\em basepoints} of the homotopy.
$\Box$
\end{definition}

A different notion of digital homotopy has been introduced
by~\cite{LuptonEtal,Staecker}. The latter paper calls this
{\em strong homotopy}. It is defined as follows.

\begin{definition}
Let $X$ and $Y$ be digital images.
Let $f,g: X \rightarrow Y$ be $(\kappa,\lambda)$-continuous functions.
Suppose there is a positive integer $m$ and a function
$F: X \times [0,m]_{{\Z}} \rightarrow Y$
such that
\begin{itemize}
\item for all $x \in X$, $F(x,0) = f(x)$ and $F(x,m) = g(x)$; and
\item if $x \adjeq_{\kappa} y$ in $X$ and $t_0 \adjeq_{c_1} t_1$
      in $[0,m]_{{\Z}}$, then $F(x,t_0) \adjeq_{\lambda} F(y,t_1)$
      in $Y$.
\end{itemize}
Then $F$ is a {\em strong homotopy} between $f$ and $g$, and
$f$ and $g$ are strongly $(\kappa,\lambda)$-homotopic in $Y$.

If for some $x_0 \in X$ and $y_0 \in Y$
we have $F(x_0,t)=F(x_0,0) = y_0 \in Y$ for all
$t \in [0,m]_{{\Z}}$, we say $F$ {\rm holds $x_0$ fixed},
$F$ is a {\rm strong pointed homotopy}, and $x_0$ and
$y_0$ are {\em basepoints} of the homotopy.
\end{definition}

If there is a (strong) (pointed) $(\kappa,\kappa)$-homotopy 
$F: X \times [0,m]_{\Z} \to X$ between
the identity function $1_X$ and a constant function, we say 
$F$ is a (digital) {\em (strong) (pointed) $\kappa$-contraction} 
and $X$ is {\em (strongly) (pointed) $\kappa$-contractible}.

If there are continuous $f: (X,\kappa) \to (Y,\lambda)$ and
$g: (Y,\lambda) \to (X,\kappa)$ such that $g \circ f$ is
(strongly) (pointed) homotopic to $\id_X$ and $f \circ g$ is
(strongly) (pointed) homotopic to $\id_Y$, then
$(X,\kappa)$ and $(Y,\lambda)$ are
{\em (strongly) (pointed) homotopy equivalent} or 
{\em have the same (strong) (pointed) homotopy type}.

If $r: X \to X$ is a $\kappa$-retraction of $X$ to $Y \subset X$
that is (strongly) homotopic to $\id_X$, then $r$ is 
a ({\em strong} in the sense of digital homotopy) 
{\em deformation retraction}. If
a ({\em strong} in the sense of digital homotopy)
deformation retraction of $X$ to $Y \subset X$ holds fixed
every point of $Y$, then $r$ is a {\em strong}
(in the sense of deformation theory) 
({\em strong} in the sense of digital homotopy) {\em deformation retraction}.

If $f: (X, \kappa) \to (Y,\lambda)$ is a continuous bijection
such that $f^{-1}: (Y,\lambda) \to (X, \kappa)$ is continuous,
then $f$ is an {\em isomorphism} (called {\em homeomorphism}
in~\cite{Boxer94}) and $(X, \kappa)$ and $(Y,\lambda)$ are
{\em isomorphic}.

\section{Hyperspaces}
\label{hyperspaceSec}
The book~\cite{Nadler} is a good source for much of the material
discussed in this section that is taken from classical topology.

In classical topology, given a topological space $X$, we denote by
$2^X$ the set or {\em hyperspace} of nonempty compact subsets of $X$. 
If $X$ is a metric space with metric $d$, $2^X$ becomes a 
metric space with the Hausdorff metric, or some other
metric, based on $d$.

Given a digital image $(X,\kappa)$, we seek a somewhat parallel 
construction of a graph based on finite subsets of $X$. We let
\[ 2^X = \{Y ~ | ~ \emptyset \neq Y \subset X, ~\#Y < \infty\}.
\]
We define the $\kappa'$ adjacency for $2^X$ as follows.

\begin{definition}
\label{hyperAdjDef}
Let $\{A,B\} \subset 2^X$, $A \neq B$. Then $A \adj_{\kappa'} B$
if and only if given $a \in A$ and $b \in B$, there exist
$a_0 \in A$ and $b_0 \in B$
such that $a \adjeq_{\kappa} b_0$ and $b \adjeq_{\kappa} a_0$.
\end{definition}
The pair $(2^X,\kappa')$ is a graph or 
{\em tolerance space}~\cite{ZeeB}, the 
{\em hyperspace of $(X,\kappa)$}.
Note we do not call this hyperspace a digital image, since
$2^X$ is not a subset of $\Z^n$. However, since digital
topology's notions of continuous functions are defined in terms
of graph adjacency, or, alternately, graph connectedness, they
are naturally applied to this construction.

In classical topology, it is common to denote by $C(X)$ the subset of $2^X$
consisting of connected members of $2^X$. Since the notation
$C(X,\kappa)$ is established in the literature of digital topology as
the set of $\kappa$-continuous self maps on $X$, we use the notation
\[ K(X, \kappa') = 
   (\{A \in 2^X ~ | ~ A \mbox{ is $\kappa$-connected}\}, \kappa').
\]
We will use the abbreviation $K(X)$ when $\kappa$ is understood.

\begin{exl}
$K([a,b]_{\Z},c_1')$ and 
          $(\{(x,y) \in \Z^2 ~|~ a \le x \le y \le b\}, c_2)$
   are isomorphic graphs.
\end{exl}

\begin{proof}
Let $X = K([a,b]_{\Z},c_1')$, 
$Y=\{(x,y) \in \Z^2 ~|~ a \le x \le y \le b\}$. Consider the
function $F: X \to Y$ given by $F([m,n]_{\Z}) = (m,n)$. It is
elementary to show that $F$ is a $(c_1',c_2)$-isomorphism.
\end{proof}

\section{Cardinality}
\label{cardSec}
\begin{remark}
Let $(X,\kappa)$ be a digital image such that $\#X = n$.
Then $\#2^X = 2^n - 1$. This is because for each
$x \in X$ and $A \in 2^X$, either $x \in A$ or $x \not \in A$.
This yields $2^n$ possible combinations of pixels, but we exclude the
empty set.
\end{remark}

However, the following example shows that $K(X)$ may be considerably
smaller than $2^X$.

\begin{exl}
\label{cardOfInterval}
$\#K([1,n]_{\Z},c_1') = n(n+1)/2$.
\end{exl}

\begin{proof}
For $i \in [1,n]_{\Z}$, the members of 
$K([1,n]_{\Z},c_1')$ that
have $i$ as their largest member are those of 
$\{[j,i]_{\Z}\}_{j=1}^i\}$. Since there are $i$ digital
intervals with largest member $i$ in $K([1,n]_{\Z},c_1')$,
\[ \#K([1,n]_{\Z},c_1') = \sum_{i=1}^n i = n(n+1)/2.
\]
\end{proof}

\section{Maps on digital hyperspaces}
\label{mapSec}
In this section, we study maps induced on hyperspaces by
continuous maps between digital images.

\subsection{Induced maps}
Given a continuous map $f: (X,\kappa) \to (Y,\lambda)$,
we show below that $f$ induces a 
$(\kappa',\lambda')$-continuous map
$f_*: 2^X \to 2^Y$ such that
$f_*|_{K(X)}: K(X) \to K(Y)$ is also
$(\kappa',\lambda')$-continuous. In the following,
we will use the notation $f_*$ to abbreviate $f_*|_{K(X)}$.

\begin{thm}
\label{cont-thm}
Let $(X,\kappa)$ and $(Y,\lambda)$ be digital images and let
$f: X \to Y$. Then $f$ is $(\kappa,\lambda)$-continuous if and only if
the induced functions $f_*: 2^X \to 2^Y$ and
$f_*: K(X) \to K(Y)$
defined by $f_*(A) = f(A)$ are
$(\kappa',\lambda')$-continuous.
\end{thm}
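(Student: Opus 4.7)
My plan is to reduce the whole statement to the pointwise adjacency criterion of Theorem~\ref{continuityPreserveAdj}, and then chase through the definition of $\kappa'$-adjacency in both directions.

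For the forward direction, assume $f$ is $(\kappa,\lambda)$-continuous. First I must verify that $f_*$ is well-defined on each target: if $A \in 2^X$ then $f(A)$ is a nonempty finite subset of $Y$, so it lies in $2^Y$, and if $A$ is $\kappa$-connected then Definition~\ref{continuous} gives $f(A)$ $\lambda$-connected, so $f(A)\in K(Y)$. Next, to show $f_*$ preserves the hyperspace adjacency, I take $A \adj_{\kappa'} B$ in $2^X$ and aim for $f(A) \adjeq_{\lambda'} f(B)$. If $f(A)=f(B)$ we are done; otherwise, given any $\alpha \in f(A)$ and $\beta \in f(B)$, write $\alpha = f(a)$ and $\beta = f(b)$, apply Definition~\ref{hyperAdjDef} to get $a_0\in A, b_0\in B$ with $a \adjeq_\kappa b_0$ and $b \adjeq_\kappa a_0$, then push through $f$ using Theorem~\ref{continuityPreserveAdj} to obtain $\alpha \adjeq_\lambda f(b_0) \in f(B)$ and $\beta \adjeq_\lambda f(a_0) \in f(A)$. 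This exactly witnesses $f(A) \adj_{\kappa'} f(B)$. The same computation applies verbatim to the restriction to $K(X)$.

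For the reverse direction, assume one of the induced maps is continuous, and take $x \adj_\kappa x'$ in $X$ with $x \neq x'$. The key observation is that $\{x\}$ and $\{x'\}$ are connected singletons, hence lie in both $2^X$ and $K(X)$, so I may use either hypothesis. A direct unwinding of Definition~\ref{hyperAdjDef} on singletons shows $\{x\} \adj_{\kappa'} \{x'\}$ iff $x \adj_\kappa x'$, so our hypothesis yields $\{f(x)\} = f_*(\{x\}) \adjeq_{\lambda'} f_*(\{x'\}) = \{f(x')\}$. Unwinding the singleton case once more, this forces $f(x) \adjeq_\lambda f(x')$. Theorem~\ref{continuityPreserveAdj} then delivers continuity of $f$.

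I do not expect any serious obstacle; the main subtlety is just bookkeeping around the $\adjeq$ convention, namely being careful that $A\neq B$ in $2^X$ may still allow $f(A)=f(B)$ (handled by the $\adjeq_{\lambda'}$ conclusion), and that the $\kappa'$-adjacency definition quantifies over \emph{all} pairs $(a,b)\in A\times B$ so I must verify the witnesses for an arbitrary such pair rather than just one. Also, when applying the hypothesis in the reverse direction via $K(X)$, I should point out explicitly that singletons are trivially $\kappa$-connected so the argument does not require any nontrivial connected sets to exist in $X$.
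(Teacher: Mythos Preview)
Your proposal is correct and follows essentially the same route as the paper: both directions are handled by reducing to the pointwise adjacency criterion of Theorem~\ref{continuityPreserveAdj} and unwinding Definition~\ref{hyperAdjDef}, with the reverse direction carried out via singletons. You are slightly more explicit than the paper in checking that $f_*$ lands in $K(Y)$, in handling the possibility $f(A)=f(B)$, and in noting that singletons lie in $K(X)$, but these are refinements of the same argument rather than a different approach.
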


\begin{proof}
Since a digitally continuous function preserves 
adjacency and connectivity,
the same argument works for both of the induced functions.

Suppose $f$ is continuous. Let $A,B \in 2^X$ such that
$A \adj_{\kappa'} B$. Let $x \in A$.
There exists $y \in B$ such that $x \adjeq_{\kappa} y$. By
the continuity of $f$ we have 
$f(x) \adjeq_{\lambda} f(y)$. Similarly, for
$b \in B$, there exists $a \in A$ such that
$a \adjeq_{\kappa} b$
and $f(a) \adjeq_{\lambda} f(b)$. Therefore,
$f_*(A) = f(A) \adjeq_{\lambda'} f(B) = f_*(B)$. Thus $f_*$ is
$(\kappa', \lambda')$-continuous.

Suppose $f_*$ is $(\kappa', \lambda')$-continuous. Let $x,y \in X$ such that
$x \adj_{\kappa} y$. Then $\{x\} \adj_{\kappa'} \{y\}$, so
\[ \{f(x)\} = f_*(\{x\}) \adjeq_{\lambda'} f_*(\{y\}) = \{f(y)\}.
\]
Therefore, $f(x) \adjeq_{\lambda} f(y)$. Thus, $f$ is $(\kappa,\lambda)$-continuous.
\end{proof}

We have the following as an immediate consequence of Theorem~\ref{cont-thm}.

\begin{cor}
Let $(X,\kappa)$ and $(Y,\lambda)$ be digital images and let
$f: X \to Y$. Then the following are equivalent.
\begin{itemize}
    \item $f$ is a $(\kappa,\lambda)$-isomorphism;
    \item the induced function $f_*: 2^X \to 2^Y$ is a
$(\kappa',\lambda')$-isomorphism; and
    \item the induced function $f_*: K(X) \to K(Y)$ is a
$(\kappa',\lambda')$-isomorphism
\end{itemize} 
\end{cor}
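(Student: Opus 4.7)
The plan is to prove the three-way equivalence by showing $(1) \Rightarrow (2) \wedge (3)$ and then $(2) \Rightarrow (1)$ and $(3) \Rightarrow (1)$. The engine throughout is Theorem~\ref{cont-thm}, which translates continuity of $f$ into continuity of each induced map. What Theorem~\ref{cont-thm} does not directly supply is the bijection/inverse portion of \emph{isomorphism}, and that is where the modest remaining work sits.

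For the forward implication, suppose $f$ is a $(\kappa,\lambda)$-isomorphism with inverse $g = f^{-1}$. Applying Theorem~\ref{cont-thm} to both $f$ and $g$, each of $f_*,g_*$ is continuous on both the $2^X$/$2^Y$ and the $K(X)$/$K(Y)$ versions; note that $f_*$ really does carry $K(X)$ into $K(Y)$, because $f$, being continuous, preserves connectedness. A direct set-theoretic computation gives $f_*(g_*(B)) = f(g(B)) = B$ and $g_*(f_*(A)) = A$, so in each setting $f_*$ is a continuous bijection with continuous two-sided inverse $g_*$, hence an isomorphism.

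For the reverse, suppose $f_* : 2^X \to 2^Y$ is a $(\kappa',\lambda')$-isomorphism. Theorem~\ref{cont-thm} immediately yields that $f$ is $(\kappa,\lambda)$-continuous. Testing $f_*$ on singletons gives bijectivity of $f$: if $f(x) = f(x')$ then $f_*(\{x\}) = f_*(\{x'\})$ forces $x = x'$, and given $y \in Y$, some $A \in 2^X$ satisfies $f_*(A) = \{y\}$, so every $a \in A$ maps to $y$. The key identification is that, with $f$ now a set-theoretic bijection, the inverse $(f_*)^{-1}$ coincides with $(f^{-1})_*$: for any $B \in 2^Y$, both produce the set $\{x \in X \mid f(x) \in B\}$. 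Since $(f_*)^{-1}$ is continuous by hypothesis, $(f^{-1})_*$ is continuous, and a second application of Theorem~\ref{cont-thm}, this time to $f^{-1}$, shows $f^{-1}$ is continuous. Thus $f$ is an isomorphism.

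The implication $(3) \Rightarrow (1)$ follows by exactly the same argument, because the only members of $2^X$ and $2^Y$ used above are singletons, which are automatically $\kappa$- and $\lambda$-connected and therefore lie in $K(X)$ and $K(Y)$. The main obstacle is the identification $(f_*)^{-1} = (f^{-1})_*$ in the reverse direction, but once bijectivity of $f$ is in hand this is simply the observation that ``take image under $f$'' and ``take image under $f^{-1}$'' are mutually inverse operations on finite power sets.
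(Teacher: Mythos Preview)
Your proof is correct and follows the same line as the paper, which simply declares the corollary an immediate consequence of Theorem~\ref{cont-thm}. You have supplied the details the paper leaves implicit---recovering bijectivity of $f$ from that of $f_*$ via singletons, and identifying $(f_*)^{-1}$ with $(f^{-1})_*$ so that Theorem~\ref{cont-thm} can be applied to $f^{-1}$.
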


\begin{prop}
\label{compositionProp}
Given continuous functions $f: (X,\kappa) \to (Y,\lambda)$
and $g: (Y,\lambda) \to (W,\mu)$, we have
$(g \circ f)_* = g_* \circ f_*$.
\end{prop}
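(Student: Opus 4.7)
The plan is to verify the equality pointwise on the domain $2^X$, since two functions into the same codomain agree iff they agree on every element. This reduces the claim to an elementary set-theoretic identity about images under composition of functions.

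First I would note that the induced maps are all well-defined: by Proposition~\ref{compoPreservesContin}, $g \circ f$ is $(\kappa, \mu)$-continuous, so Theorem~\ref{cont-thm} gives a $(\kappa', \mu')$-continuous map $(g \circ f)_*: 2^X \to 2^W$, and the same theorem guarantees $f_*: 2^X \to 2^Y$ and $g_*: 2^Y \to 2^W$, whose composition is therefore a function $2^X \to 2^W$. Both sides of the claimed equation are thus functions with matching domain and codomain.

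Next, for any $A \in 2^X$, I would chase through the definitions:
\[
(g \circ f)_*(A) \;=\; (g \circ f)(A) \;=\; g(f(A)) \;=\; g(f_*(A)) \;=\; g_*(f_*(A)) \;=\; (g_* \circ f_*)(A),
\]
where each equality is either the definition of the induced map ($h_*(S) = h(S)$) or the standard fact that the image of $A$ under $g \circ f$ equals the image under $g$ of the image under $f$. Since $A$ was arbitrary, this establishes $(g \circ f)_* = g_* \circ f_*$ on $2^X$, and restricting to $\kappa$-connected subsets yields the analogous identity for $K(X) \to K(W)$ (using that $f_*$ sends $K(X)$ into $K(Y)$, which follows from Theorem~\ref{cont-thm} and the preservation of connectedness by continuous maps).

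There is no real obstacle here: the whole content of the proposition is the set-theoretic naturality of "image of a subset," combined with the fact that the hyperspace construction was set up to take $h \mapsto h_*$ where $h_*(A) = h(A)$. The main thing to be careful about is simply checking that each of $f_*$, $g_*$, and $(g \circ f)_*$ is a legitimate map on the appropriate hyperspace, which is handled by the previously established results.
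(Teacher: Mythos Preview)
Your proof is correct and follows essentially the same approach as the paper: the paper's proof is exactly the pointwise chain of equalities $(g \circ f)_*(A) = (g \circ f)(A) = g(f(A)) = g_*(f_*(A)) = (g_* \circ f_*)(A)$ for $A \in 2^X$. Your additional remarks on well-definedness and the restriction to $K(X)$ are reasonable elaborations but not present in the paper's terse version.
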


\begin{proof}
The assertion follows from the observation that $A \in 2^X$ implies
\[ (g \circ f)_*(A) = (g \circ f)(A) = g(f(A)) = 
   g_*(f_*(A)) = (g_* \circ f_*)(A).
\]
\end{proof}

The following is elementary.

\begin{lem}
\label{idLemma}
Let $(X,\kappa)$ be a digital image. Then
$(\id_X)_* = \id_{(2^X,\kappa')}$ and
$(\id_X)_* = \id_{(K(X),\kappa')}$.
\end{lem}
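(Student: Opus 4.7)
The plan is to simply unwind the definition of the induced map. Recall from Theorem~\ref{cont-thm} that $f_*$ is defined on $2^X$ (and on $K(X)$) by $f_*(A) = f(A) = \{f(a) \mid a \in A\}$. So the proof amounts to substituting $f = \id_X$ and observing that the image of $A$ under the identity is $A$ itself.

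Concretely, I would take an arbitrary $A \in 2^X$ and compute
\[
(\id_X)_*(A) \;=\; \id_X(A) \;=\; \{\id_X(a) \mid a \in A\} \;=\; \{a \mid a \in A\} \;=\; A,
\]
which shows $(\id_X)_* = \id_{(2^X,\kappa')}$ pointwise. Since $K(X) \subset 2^X$ and the definition of $f_*$ does not depend on whether $A$ is $\kappa$-connected, exactly the same calculation applied to $A \in K(X)$ gives $(\id_X)_* = \id_{(K(X),\kappa')}$. Note that we use here the fact that $A$ nonempty and finite implies $\id_X(A)$ is nonempty and finite (so it lives in $2^X$), and that the identity preserves $\kappa$-connectedness (so $\id_X(A) \in K(X)$ when $A \in K(X)$), both of which are immediate.

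The hard part, if there is one, is purely bookkeeping: making explicit that the lemma actually asserts two equalities of functions (one on $2^X$, one on $K(X)$), and confirming that the same one-line calculation discharges both. No adjacency argument is needed, since the claim is just that the induced map agrees with the identity on underlying sets, and equality of functions on a graph automatically respects whatever adjacency structure the graph carries.
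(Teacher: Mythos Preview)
Your proof is correct and matches the paper's approach: the paper simply declares the lemma ``elementary'' and gives no argument, and your unwinding of the definition $(\id_X)_*(A) = \id_X(A) = A$ is exactly the trivial verification being alluded to.
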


\begin{thm}
The hyperspace construction yields covariant functors $F, F'$ from 
the category of digital images and continuous functions to
the category of graphs and continuous functions (respectively, to
the category of connected graphs and continuous functions), in which
$F(X,\kappa) = (2^X, \kappa')$, $F'(X,\kappa) = K(X,\kappa')$ and for
$f: (X,\kappa) \to (Y,\lambda)$ we have $F(f)=f_*$, 
$F'(f)=f_*$.
\end{thm}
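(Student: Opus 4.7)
The plan is to verify the four functor axioms in turn, since each of them has been essentially set up by the preceding results; the bulk of the work has already been done.

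First I would check that the object assignments are well defined. For $F$, the target $(2^X,\kappa')$ is a graph (tolerance space) by the definition following Definition~\ref{hyperAdjDef}, so no further work is needed. For $F'$, the target $K(X,\kappa')$ is by definition a subgraph of $(2^X,\kappa')$; to land in the category of \emph{connected} graphs one needs that $K(X,\kappa')$ is itself $\kappa'$-connected. I would either restrict the source category to connected digital images (which is natural, since the functoriality statement only pairs $F'$ with a connected target) or invoke the connectedness results forthcoming in Section~\ref{connectSec}. I would flag this as the main subtlety: the ``connected graphs'' clause requires either a hypothesis on the source or an appeal to a later connectedness result such as the fact that, for connected $X$, the hyperspace $K(X,\kappa')$ can be joined via single-point-insertion moves of the form $A \adjeq A \cup \{y\}$ where $y$ is $\kappa$-adjacent to $A$.

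Next I would handle the morphism assignments. Given continuous $f \colon (X,\kappa) \to (Y,\lambda)$, Theorem~\ref{cont-thm} states exactly that the induced $f_* \colon 2^X \to 2^Y$ is $(\kappa',\lambda')$-continuous and that its restriction $f_* \colon K(X) \to K(Y)$ is also $(\kappa',\lambda')$-continuous. Moreover, since a $(\kappa,\lambda)$-continuous image of a $\kappa$-connected set is $\lambda$-connected (Definition~\ref{continuous}), the restricted map does indeed send $K(X)$ into $K(Y)$, so $F'(f) = f_*$ is a legitimate morphism in the target category.

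Third, I would dispose of the two functoriality equations. Preservation of identities is precisely Lemma~\ref{idLemma}, which gives $F(\id_X) = (\id_X)_* = \id_{(2^X,\kappa')}$ and analogously for $F'$. Preservation of composition is Proposition~\ref{compositionProp}: for continuous $f \colon (X,\kappa) \to (Y,\lambda)$ and $g \colon (Y,\lambda) \to (W,\mu)$,
\[
F(g \circ f) = (g \circ f)_* = g_* \circ f_* = F(g) \circ F(f),
\]
and the same identity holds after restriction to $K(X) \to K(W)$, giving $F'(g \circ f) = F'(g) \circ F'(f)$.

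Having assembled these four items, the proof of functoriality is complete. The only nonroutine step is confirming that $K(X,\kappa')$ belongs to the category of connected graphs; everything else is a direct citation of Theorem~\ref{cont-thm}, Proposition~\ref{compositionProp}, and Lemma~\ref{idLemma}.
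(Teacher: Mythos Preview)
Your proof is correct and follows the same approach as the paper, which simply cites Proposition~\ref{compositionProp} and Lemma~\ref{idLemma} without further elaboration. Your additional care in invoking Theorem~\ref{cont-thm} for well-definedness of the morphism assignments, and in flagging the connectedness requirement on $K(X,\kappa')$ for $F'$ to land in connected graphs (a point the paper's one-line proof does not address, though Theorem~\ref{XconnIFFK(X)conn} later confirms $K(X,\kappa')$ is connected precisely when $X$ is), makes your argument more complete than the paper's own.
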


\begin{proof}
This follows from Proposition~\ref{compositionProp} and
Lemma~\ref{idLemma}.
\end{proof}

Not every continuous function on the hyperspace 
of a digital image is
induced by a continuous map between digital images,
as shown by the following.

\begin{exl}
Let $X = [0,1]_{\Z}$.
Let $F: K(X, c_1') \to K(X, c_1')$ be the function
given by $F(A) = X$ for all $A \in K(X)$. 
$F$ is constant, hence continuous, 
and is not induced by any $f: X \to X$ since for each such
function, e.g., $f(0) \in X = \{\,0,1\,\} = f_*(0)$, hence
$f_*(\{\, 0 \,\}) \neq F(0)$.
\end{exl}

\subsection{Retraction and homotopy}

\begin{thm}
\label{retractThm}
Let $(X,\kappa)$ and $(Y,\kappa)$ be digital images and let
$r: X \to Y$ be a $\kappa$-retraction. Then the induced
maps $r_*: 2^X \to 2^Y$ and $r_*: K(X) \to K(Y)$ are $\kappa'$-retractions.
\end{thm}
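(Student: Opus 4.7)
The plan is to verify the two defining properties of a retraction for $r_*$: continuity and the identity condition on the target subspace. Both follow almost immediately from results and observations already in place.

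First, continuity. Since $r : X \to Y$ is a $\kappa$-retraction, it is in particular $(\kappa,\kappa)$-continuous. By Theorem~\ref{cont-thm}, the induced maps $r_* : 2^X \to 2^Y$ and $r_* : K(X) \to K(Y)$ are $(\kappa',\kappa')$-continuous. That disposes of the continuity requirement with no further work.

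Second, I need to check that $r_*$ fixes every element of the target hyperspace. Let $A \in 2^Y$, so $A$ is a nonempty finite subset of $Y$. Then $r_*(A) = r(A) = \{r(a) \mid a \in A\}$; since $r|_Y = \id_Y$, each $r(a) = a$, giving $r_*(A) = A$. Thus $r_*|_{2^Y} = \id_{2^Y}$. The same pointwise argument applies to $A \in K(Y)$, which is just $A \in 2^Y$ with the extra property that $A$ is $\kappa$-connected; the identity obviously preserves this, so $r_*|_{K(Y)} = \id_{K(Y)}$.

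Combining these two observations gives that $r_* : 2^X \to 2^Y$ and $r_* : K(X) \to K(Y)$ are $\kappa'$-retractions in the sense of the definition in Section~\ref{prelimSec}. There is no real obstacle here — the only thing worth flagging is the trivial but necessary observation that $r$ carries $2^X$ into $2^Y$ and $K(X)$ into $K(Y)$ (the latter because continuous images of $\kappa$-connected sets are $\kappa$-connected), so that $r_*$ genuinely has the claimed codomain. Once that is noted, the proof is essentially a citation of Theorem~\ref{cont-thm} plus a one-line set-theoretic check.
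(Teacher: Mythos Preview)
Your proof is correct and follows essentially the same approach as the paper's: invoke Theorem~\ref{cont-thm} for continuity of $r_*$, then verify $r_*|_{2^Y} = \id_{2^Y}$ and $r_*|_{K(Y)} = \id_{K(Y)}$ via $r|_Y = \id_Y$. The paper's proof additionally records $r_*(2^X) = 2^Y$ and $r_*(K(X)) = K(Y)$ explicitly, but this is immediate from the identity condition you already verified.
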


\begin{proof}
It follows from Theorem~\ref{cont-thm} that each version
of $r_*$ is $\kappa'$-continuous.
It is clear that $r_*|_{2^Y} = \id_{2^Y}$, 
$r_*(2^X) = 2^Y$, $r_*(K(X)) = K(Y)$,
$r_*|_{K(Y)} = \id_{K(Y)}$. The assertion follows.
\end{proof}

\begin{thm}
\label{htpyInduction}
Let $f$ and $g$ be (strongly) (pointed) homotopic 
maps from $(X,\kappa)$ to $(Y, \lambda)$.
Then $f_*$ and $g_*$ are (strongly) (pointed) homotopic 
maps from $(2^X,\kappa')$ to $(2^Y, \lambda')$, and
from $K(X)$ to $K(Y)$. In the case
of (strongly) pointed homotopy, if $x_0 \in X$ is held
fixed by the (strong) pointed homotopy from $f$ to $g$,
then $\{x_0\}$ is held fixed by the (strong)
pointed homotopy from $f_*$ to $g_*$.
\end{thm}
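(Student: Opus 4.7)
The plan is to lift a given homotopy $F: X \times [0,m]_{\Z} \to Y$ between $f$ and $g$ to a homotopy $F_*: 2^X \times [0,m]_{\Z} \to 2^Y$ defined by $F_*(A,t) = \{F(a,t) \mid a \in A\}$, i.e., the induced map of the time-$t$ slice applied pointwise to $A$. By restriction this simultaneously yields the candidate homotopy on $K(X) \to K(Y)$, since each slice $F_t$ is $(\kappa,\lambda)$-continuous and hence preserves $\kappa$-connectedness, so $F_*(A,t) \in K(Y)$ whenever $A \in K(X)$.

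To handle the ordinary (non-strong) case, I verify each bullet of Definition~\ref{htpy-2nd-def} in turn. The endpoint condition $F_*(A,0)=f_*(A)$ and $F_*(A,m)=g_*(A)$ is immediate. For the slice condition, each $F_t$ is $(\kappa,\lambda)$-continuous by assumption, so Theorem~\ref{cont-thm} gives that $A \mapsto F_*(A,t) = (F_t)_*(A)$ is $(\kappa',\lambda')$-continuous. For the path condition, it suffices to show $F_*(A,t) \adjeq_{\lambda'} F_*(A,t+1)$ for each $A$ and each consecutive pair $t, t+1$; given $F(a,t) \in F_*(A,t)$ the element $F(a,t+1) \in F_*(A,t+1)$ witnesses the required $\lambda$-adjacency via the path $F_a$, and the symmetric requirement is identical.

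For the strong homotopy variant I only need to verify the single mixed adjacency condition: if $A \adjeq_{\kappa'} B$ in $2^X$ and $t_0 \adjeq_{c_1} t_1$, then $F_*(A,t_0) \adjeq_{\lambda'} F_*(B,t_1)$. Given $F(a,t_0)$ in the left-hand set, either $A = B$ (take $b_0 = a$) or Definition~\ref{hyperAdjDef} supplies $b_0 \in B$ with $a \adjeq_{\kappa} b_0$; in either case the strong homotopy condition on $F$ gives $F(a,t_0) \adjeq_{\lambda} F(b_0,t_1) \in F_*(B,t_1)$. The reverse direction is entirely symmetric, and if $F_*(A,t_0) = F_*(B,t_1)$ we are done trivially. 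The pointed case is essentially bookkeeping: if $F(x_0,t) = y_0$ for all $t$, then $F_*(\{x_0\},t) = \{y_0\}$ for all $t$, so $\{x_0\}$ and $\{y_0\}$ serve as basepoints of the lifted homotopy.

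The only genuine obstacle is the edge case $A = B$ when extracting a witness via Definition~\ref{hyperAdjDef}, which explicitly requires $A \neq B$ for $\adj_{\kappa'}$; the argument therefore must be phrased throughout in terms of $\adjeq_{\kappa'}$ rather than $\adj_{\kappa'}$, splitting off the equality case separately. Once that is handled, the same construction $F_*$ covers all four variants (homotopy vs. strong homotopy, unpointed vs. pointed) and both target categories ($2^X$ and $K(X)$) uniformly.
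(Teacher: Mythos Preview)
Your proposal is correct and follows essentially the same approach as the paper: both define the lifted homotopy by $F_*(A,t)=(F_t)_*(A)$ and verify the homotopy axioms directly, with the strong and pointed variants handled by the obvious modifications. Your version is slightly cleaner in invoking Theorem~\ref{cont-thm} for the slice continuity rather than re-arguing it inline, and your explicit treatment of the $A=B$ edge case is a refinement the paper glosses over, but these are presentational differences rather than a different route.
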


\begin{proof}
We give a proof for homotopy using $2^X$ and $2^Y$.
The proofs for strong or pointed homotopies
and for $K(X)$ and $K(Y)$ are similar.

By hypothesis, there is a function
$H: X \times [0,n]_{\Z} \to Y$ for some $n \in \N$ such that
\begin{itemize}
    \item $H(x,0) = f(x)$ and $H(x,n) = g(x)$ for all $x \in X$.
    \item For all $x \in X$, the induced function $H_x: [0,n]_{\Z} \to Y$ defined by
          $H_x(t) = H(x,t)$ is $(c_1,\lambda)$-continuous.
    \item For all $t \in [0,n]_{\Z}$, the induced function
          $H_t: X \to Y$ defined by $H_t(x) = H(x,t)$ is $(\kappa, \lambda)$-continuous.
\end{itemize}
Let $H_*: 2^X \times [0,n]_{\Z} \to 2^Y$ be the function
$H_*(A,t) = H_t(A)$.
\begin{itemize}
    \item We have
    \[H_*(A,0) = H_0(A) = f(A) = f_*(A).
    \]
    Similarly, $H_*(A,n) = g_*(A)$.
    \item For all $A \in 2^X$, consider the induced function
          $H_{*,A}: [0,n]_{\Z} \to 2^Y$ defined by
          \[H_{*,A}(t) = H(A,t) = 
          \bigcup_{x \in A} \{H_x(t)\}.
          \]
          Since $0 \le t < n$
          implies $H_x(t) \adjeq_{\lambda} H_x(t+1)$, it follows that
          $H_{*,A}(t) \adjeq_{\lambda'} H_{*,A}(t+1)$. Thus the induced function
          $H_{*,A}$ is $(c_1, \lambda')$-continuous.
    \item For all $t \in [0,n]_{\Z}$, consider the induced function
          $H_{*,t}: 2^X \to 2^Y$ given by
          \[ H_{*,t}(A) = H(A,t) = 
          \bigcup_{x \in A} \{H_t(x)\}.
          \]
          If $A \adj_{\kappa'} B$ then for each $a \in A$ there exists $b \in B$
          such that $a \adjeq_{\kappa} b$, and for each $\beta \in B$ there exists
          $\alpha \in A$ such that $\alpha \adjeq_{\kappa} \beta$. Therefore,
          $H_t(a) \adjeq_{\lambda} H_t(b)$ and 
          $H_t(\alpha) \adjeq_{\lambda} H_t(\beta)$. It follows that
          $H_{*,t}(A) \adjeq_{\lambda'} H_{*,t}(B)$.
          Thus $H_{*,t}$ is $(\kappa', \lambda')$-continuous.
\end{itemize}
The above shows that $H_*$ is a homotopy from $f_*$ to $g_*$.
\end{proof}

\begin{thm}
\label{htpyThm}
Let $(X,\kappa)$ and $(Y,\lambda)$ be digital images.
\begin{enumerate}
    \item If $(X,\kappa)$ and $(Y,\lambda)$ have the same 
          (pointed) homotopy type,
then $(2^X,\kappa')$ and $(2^Y,\lambda')$ have the 
same (pointed) homotopy type; as do 
$K(X,\kappa')$ and $K(Y, \lambda')$.
    \item If $(X,\kappa)$ and $(Y,\lambda)$ have the same strong
          (pointed) homotopy type, then $(2^X,\kappa')$ 
          and $(2^Y,\lambda')$ have 
          the same strong (pointed) homotopy type; as do 
$K(X,\kappa')$ and $K(Y, \lambda')$.
    \item Let $(X,\kappa)$ be (pointed) contractible (respectively,
          (pointed) strongly contractible). Then $(2^X,\kappa')$ is contractible 
          (respectively, (pointed) strongly contractible); as is $K(X,\kappa')$.
\end{enumerate}

\end{thm}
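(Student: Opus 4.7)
The plan is to treat this as a routine consequence of the functorial machinery already established: Proposition~\ref{compositionProp} ($(g\circ f)_* = g_*\circ f_*$), Lemma~\ref{idLemma} ($(\id_X)_* = \id_{2^X}$, $(\id_X)_* = \id_{K(X)}$), and especially Theorem~\ref{htpyInduction} (homotopies of maps lift to homotopies of induced maps, in pointed/strong flavors). Each of the three items reduces to combining these three facts.

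For part (1), I would start with continuous $f:X\to Y$ and $g:Y\to X$ such that $g\circ f \sim_{\kappa,\kappa} \id_X$ and $f\circ g \sim_{\lambda,\lambda} \id_Y$. Applying Theorem~\ref{htpyInduction} to each homotopy yields $(g\circ f)_* \sim (\id_X)_*$ and $(f\circ g)_* \sim (\id_Y)_*$ as maps of hyperspaces. Rewriting the left-hand sides via Proposition~\ref{compositionProp} and the right-hand sides via Lemma~\ref{idLemma} gives $g_*\circ f_* \sim \id_{2^X}$ and $f_*\circ g_* \sim \id_{2^Y}$, so $f_*$ and $g_*$ are mutually inverse homotopy equivalences. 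The same argument works verbatim for the restrictions to $K(X)$ and $K(Y)$, which is legitimate because Theorem~\ref{cont-thm} and Theorem~\ref{htpyInduction} each have a $K$-version. For the pointed case, Theorem~\ref{htpyInduction} additionally ensures that if $x_0$ is held fixed then $\{x_0\}$ is held fixed by the induced homotopy, so pointedness is preserved when one takes $\{x_0\}$ as the basepoint of $2^X$ (and $K(X)$).

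Part (2) is obtained by replacing "homotopic" with "strongly homotopic" throughout the argument of part (1); the strong version of Theorem~\ref{htpyInduction} is already part of that theorem's statement, so nothing new is needed.

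For part (3), if $H:X\times[0,m]_\Z\to X$ is a (strong, pointed) $\kappa$-contraction from $\id_X$ to a constant map $c_{x_0}$, I would apply Theorem~\ref{htpyInduction} directly to obtain a (strong, pointed) $\kappa'$-homotopy from $(\id_X)_*=\id_{2^X}$ to $(c_{x_0})_*$. Since $(c_{x_0})_*(A)=\{x_0\}$ for every $A\in 2^X$, the induced map $(c_{x_0})_*$ is the constant function at $\{x_0\}\in 2^X$, so $2^X$ is (strongly, pointed) contractible, with basepoint $\{x_0\}$ in the pointed cases. The same argument applied to $K(X)$ gives the statement for $K(X)$. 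There is no real obstacle here; the only care needed is bookkeeping to make sure each of the eight "(strong)/(pointed)" adjective combinations is handled uniformly, which follows because Theorem~\ref{htpyInduction} is stated in exactly that uniform way.
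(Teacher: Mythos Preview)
Your proposal is correct and follows essentially the same approach as the paper: parts (1) and (2) are handled identically, by pushing the homotopy equivalence through Proposition~\ref{compositionProp}, Lemma~\ref{idLemma}, and Theorem~\ref{htpyInduction}. The only minor variation is in part (3): you apply Theorem~\ref{htpyInduction} directly to the contraction and observe that $(c_{x_0})_*$ is the constant map at $\{x_0\}$, whereas the paper instead notes that ``contractible'' means ``homotopy equivalent to a point'' and invokes parts (1) and (2); both arguments are valid and amount to the same thing.
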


\begin{proof}
We give proofs for the full hyperspaces
$2^X$ and $2^Y$; the proofs for $K(X,\kappa')$
and $K(Y,\lambda')$ are similar.

\begin{enumerate}
    \item By hypothesis, there are continuous (pointed)
          maps $f: (X,\kappa) \to (Y,\lambda)$ and 
$g: (Y,\lambda) \to (X,\kappa)$ (with basepoints $x_0 \in X$,
$y_0 \in Y$) such that
$g \circ f \sim_{(\kappa, \kappa)} \id_X$ 
(holding $x_0$ fixed) and
$f \circ g \sim_{(\lambda, \lambda)} \id_Y$ 
(holding $y_0$ fixed).
By Proposition~\ref{compositionProp} and 
Theorem~\ref{htpyInduction}, 
\[g_* \circ f_* = (g \circ f)_* \sim_{(\kappa', \kappa')} (\id_X)_* = \id_{2^X} \mbox{ (holding $\{x_0\}$ fixed)}
\]
and
\[f_* \circ g_* = (f \circ g)_* \sim_{(\lambda', \lambda')} (\id_Y)_*
  = \id_{2^Y} \mbox{ (holding $\{y_0\}$ fixed)}.
\]
Therefore, $(2^X,\kappa')$ and $(2^Y,\lambda')$ have the
same homotopy type.
\item The proof for strong homotopy type is similar.
\item Since (pointed) contractible (respectively, 
(pointed) strongly contractible) means having
the same (pointed) homotopy type (respectively, (pointed) 
strong homotopy type) as
a digital image of a single point, the assertions concerning
(pointed) contractibility (respectively, (pointed) strong
contractibility) follow from the above.
\end{enumerate}
\end{proof}

\begin{thm}
Let $H: (X,\kappa) \times [0,n]_{\Z} \to (X,\kappa)$ be a 
(strong, in the sense of strong homotopy)
(strong, in the sense of retraction theory)
deformation retraction of $X$ to a subset $Y$,
i.e., a (strong) homotopy between the induced maps 
$H_0, H_n: X \to X$ such 
that $H_0 = \id_X$ and $H_n$ is a retraction
(that holds fixed every point of $Y$). Then 
$H_*: (2^X,\kappa') \times [0,n]_{\Z} \to (2^X,\kappa')$ 
(respectively, $H_*: (K(X),\kappa') \times [0,n]_{\Z} \to (K(X),\kappa')$)
is a (strong, in the sense of strong homotopy)
(strong, in the sense of retraction theory)
deformation retraction of $(2^X,\kappa')$ to $(2^Y,\kappa')$
(respectively, of $(K(X),\kappa')$ to $(K(X),\kappa')$).
\end{thm}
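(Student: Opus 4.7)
The plan is to assemble the claim from the three results already proved in this section: Theorem~\ref{htpyInduction} (induction of homotopies), Theorem~\ref{retractThm} (induction of retractions), and Lemma~\ref{idLemma} (the identity is sent to the identity). Define $H_*(A,t) = H_t(A) = \bigcup_{x\in A}\{H(x,t)\}$, exactly as in the proof of Theorem~\ref{htpyInduction}. This is well-defined as a map into $2^X$, and, because $\kappa$-continuous images of $\kappa$-connected sets are $\kappa$-connected, restricts to a map into $K(X)$ when the domain is $K(X) \times [0,n]_{\Z}$.

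I would first verify the endpoint conditions. Since $H_0 = \id_X$, Lemma~\ref{idLemma} gives $H_*(A,0) = (\id_X)_*(A) = A$, so $H_{*,0} = \id_{2^X}$ (resp.\ $\id_{K(X)}$). Since $H_n$ is a $\kappa$-retraction of $X$ onto $Y$, Theorem~\ref{retractThm} gives that $(H_n)_* = H_{*,n}$ is a $\kappa'$-retraction of $2^X$ onto $2^Y$ (resp., of $K(X)$ onto $K(Y)$). Thus $H_*$ interpolates between the identity and a $\kappa'$-retraction onto the correct subspace, which is the skeleton of a deformation retraction.

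Next, the fact that $H_*$ actually \emph{is} a homotopy between these two induced maps is precisely the content of Theorem~\ref{htpyInduction}, applied to $H$ viewed as a homotopy from $\id_X$ to $H_n$; in the ``strong in the sense of strong homotopy'' case, one invokes the strong-homotopy clause of the same theorem, since its statement explicitly covers both flavors. This handles two of the four parenthetical qualifiers at once.

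The only ingredient not already packaged in earlier results is the ``strong in the sense of retraction theory'' clause, i.e., that if $H$ holds every point of $Y$ fixed for all $t$, then $H_*$ holds every element of $2^Y$ (resp.\ $K(Y)$) fixed for all $t$. I would argue this directly: if $A \subset Y$ and $x \in A$, then $H(x,t) = x$, so $H_{*,t}(A) = \bigcup_{x\in A}\{H(x,t)\} = A$. The main ``obstacle'' is purely bookkeeping, namely tracking which of the four parenthetical qualifiers (strong-homotopy vs.\ ordinary, strong-retraction vs.\ ordinary, full hyperspace vs.\ $K$) is in force at each step; no new continuity or adjacency argument beyond Theorems~\ref{htpyInduction} and~\ref{retractThm} appears to be required.
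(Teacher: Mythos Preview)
Your proposal is correct and follows essentially the same approach as the paper, which simply cites the earlier homotopy-induction and retraction-induction results (the paper invokes Theorem~\ref{htpyThm} and Theorem~\ref{retractThm}, whereas you cite the slightly more directly applicable Theorem~\ref{htpyInduction} together with Lemma~\ref{idLemma}, but the substance is the same). Your explicit verification of the ``strong in the sense of retraction theory'' clause is a welcome bit of detail that the paper's one-line proof leaves implicit.
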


\begin{proof}
These assertions follow from Theorems~\ref{htpyThm} and~\ref{retractThm}.
\end{proof}

\subsection{Function graphs}
\label{functGraphSubsec}
In this section, we explore an analog of function spaces
for digital images.

\begin{definition}
Let $(X,\kappa)$ and $(Y,\lambda)$ be digital images.
Consider the set $(Y,\lambda)^{(X,\kappa)}$, or $Y^X$ when 
$\kappa$ and $\lambda$ can be assumed, defined by
\[ (Y,\lambda)^{(X,\kappa)} = \{f: X \to Y ~|~ f
    \mbox{ is $(\kappa,\lambda)$-continuous}\}.
\]
We say $f,g \in Y^X$ are {\em $\Phi(\kappa,\lambda)$-adjacent}, or
{\em $\Phi$-adjacent} when $\kappa$ and $\lambda$ can be assumed, if
for all $x \in X$ we have $f(x) \adjeq_{\lambda} g(x)$.
\end{definition}

A more restrictive adjacency for $Y^X$, which we 
denote as $\Psi(\kappa,\lambda)$,
is proposed in~\cite{LuptonEtal}. We have the following.

\begin{definition}
\label{PsiAdj}
{\rm ~\cite{LuptonEtal}}
Let $f,g \in Y^X$. Then $f \adj_{\Psi(\kappa,\lambda)} g$
if given $x_0 \adjeq_{\kappa} x_1$ in $X$,
$f(x_0) \adjeq_{\lambda} g(x_1)$ in $Y$.
\end{definition}

\begin{remark}
It is clear that $f \adj_{\Psi(\kappa,\lambda)} g$ implies
$f \adj_{\Phi(\kappa,\lambda)} g$. The converse is not
generally valid. For example, consider the functions
$f,g: [0,2]_{\Z} \to [0,2]_{\Z}$ given by
$f(x) = x$, $g(x) = \min\{2,x+1\}$. It is easily seen
that $f,g \in C([0,2]_{\Z},c_1)$ and
$f \adj_{\Phi(c_1,c_1)} g$. However, since
$0 \adj_{c_1} 1$ and $f(0)=0 \not \adj_{c_1} 2 = g(1)$,
$f$ and $g$ are not $\Psi(c_1,c_1)$-adjacent.
\end{remark}

We show below, at Example~\ref{cycleAndAdjs},
an important difference between $\Phi(c_1,c_1)$ 
and $\Psi(c_1,c_1)$.

\begin{lem}
\label{1StepLem}
Let $(X,\kappa)$ and $(Y,\lambda)$ be digital images.
Let $f,g \in Y^X$. Then $f$ and $g$ are
homotopic in one step if and only if
$f \adjeq_{\Phi} g$.
\end{lem}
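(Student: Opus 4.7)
The plan is to unpack both directions directly from the definitions of digital homotopy (Definition~\ref{htpy-2nd-def}) and of $\Phi$-adjacency, taking ``homotopic in one step'' to mean the existence of a homotopy $F: X \times [0,1]_{\Z} \to Y$ with $F(\,\cdot\,,0)=f$ and $F(\,\cdot\,,1)=g$.

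For the forward direction, assume such an $F$ exists. The second clause of Definition~\ref{htpy-2nd-def} says that for every $x \in X$ the induced function $F_x: [0,1]_{\Z} \to Y$ is $(c_1,\lambda)$-continuous. Since $0 \adj_{c_1} 1$, Theorem~\ref{continuityPreserveAdj} gives $F_x(0) \adjeq_{\lambda} F_x(1)$, i.e.\ $f(x) \adjeq_{\lambda} g(x)$. As this holds for every $x \in X$, either $f=g$ or $f \adj_{\Phi} g$, so $f \adjeq_{\Phi} g$.

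For the converse, assume $f \adjeq_{\Phi} g$. Define $F: X \times [0,1]_{\Z} \to Y$ by $F(x,0)=f(x)$ and $F(x,1)=g(x)$. I would then verify the three conditions of Definition~\ref{htpy-2nd-def} in order. The endpoint condition holds by construction. For each $x$, the induced $F_x$ is a two-point function with $F_x(0) \adjeq_{\lambda} F_x(1)$ (either by the $\Phi$-adjacency hypothesis if $f \adj_{\Phi} g$, or trivially if $f=g$), hence is $(c_1,\lambda)$-continuous. For each $t \in \{0,1\}$, the slice $F_t$ equals $f$ or $g$, both of which are $(\kappa,\lambda)$-continuous by hypothesis. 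Thus $F$ is a $(\kappa,\lambda)$-homotopy in one step from $f$ to $g$.

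The verification is essentially a tautology once the definitions are lined up, so there is no real obstacle; the only mild care point is the equality case hidden in $\adjeq_{\Phi}$, which is handled by letting $F$ be the constant homotopy $F(x,t) = f(x)$ when $f = g$.
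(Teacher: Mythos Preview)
Your proof is correct and follows essentially the same approach as the paper's own proof: both directions are obtained directly from the definition of digital homotopy and of $\Phi$-adjacency, with the same construction $F(x,0)=f(x)$, $F(x,1)=g(x)$ in the converse direction. If anything, you spell out the verification of the three homotopy conditions more carefully than the paper, which simply asserts that ``one sees easily'' that $F$ is a one-step homotopy.
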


\begin{proof}
Suppose $f$ and $g$ are homotopic in one step. Then there
exists $H: X \times [0,1]_{\Z} \to Y$
such that for all $x \in X$, $H(x,0)=f(x)$ and
$H(x,1)=g(x)$, and the induced function
$H_x: [0,1]_{\Z} \to Y$ given by $H_x(t)=H(x,t)$ is
$(c_1,\lambda)$ continuous. The latter implies
\[f(x) = H(x,0) \adjeq_{\lambda} H(x,1) = g(x)
\]
for all $x \in X$, so $f \adjeq_{\Phi} g$.

Suppose $f \adjeq_{\Phi} g$. Then one sees easily that
the function $H: X \times [0,1]_{\Z} \to Y$
defined by
\[ H(x,0) = f(x), ~~~ H(x,1) = g(x),
\]
is a homotopy in one step from $f$ to $g$.
\end{proof}

The following was suggested by an anonymous reviewer.

\begin{thm}
\label{homotopyInComponent}
Let $(X,\kappa)$ and $(Y,\lambda)$ be digital images.
Let $f,g \in Y^X$. Then $f$ and $g$ are
homotopic if and only if $f$ and $g$ belong to the 
same component of $(Y^X, \Phi)$.
\end{thm}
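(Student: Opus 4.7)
The plan is to prove both directions by reinterpreting a homotopy $H: X \times [0,n]_{\Z} \to Y$ as a $\Phi$-path in $Y^X$ whose vertices are the time-slices $H_t$, and conversely by concatenating one-step homotopies along a $\Phi$-path.

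For the forward direction, assume $f \sim_{(\kappa,\lambda)} g$ via $H: X \times [0,n]_{\Z} \to Y$. For each $t \in [0,n]_{\Z}$, define $h_t \in Y^X$ by $h_t(x) = H(x,t)$; by the third bullet of Definition~\ref{htpy-2nd-def}, each $h_t$ is $(\kappa,\lambda)$-continuous, so indeed $h_t \in Y^X$. The second bullet of the same definition says that for every $x$, $H_x(t) \adjeq_{\lambda} H_x(t+1)$, i.e., $h_t(x) \adjeq_{\lambda} h_{t+1}(x)$ for all $x$, which is exactly $h_t \adjeq_{\Phi} h_{t+1}$. Since $h_0 = f$ and $h_n = g$, the sequence $(h_0, h_1, \ldots, h_n)$ is a $\Phi$-path from $f$ to $g$ in $Y^X$, so $f$ and $g$ lie in the same component.

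For the reverse direction, suppose $f$ and $g$ belong to the same component of $(Y^X, \Phi)$. Then there is a $\Phi$-path $f = h_0, h_1, \ldots, h_n = g$ in $Y^X$ with $h_i \adjeq_{\Phi} h_{i+1}$ for $0 \le i < n$. By Lemma~\ref{1StepLem}, each consecutive pair $h_i, h_{i+1}$ is homotopic in one step via some $H^{(i)}: X \times [0,1]_{\Z} \to Y$. Concatenate these in the time coordinate: define $H: X \times [0,n]_{\Z} \to Y$ by $H(x,t) = H^{(i)}(x, t-i)$ when $i \le t \le i+1$ (this is well-defined at the overlaps since $H^{(i)}(x,1) = h_{i+1}(x) = H^{(i+1)}(x,0)$). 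Then $H(x,0) = f(x)$ and $H(x,n) = g(x)$; for fixed $x$, $H_x$ is a concatenation of $(c_1,\lambda)$-paths and hence a $(c_1,\lambda)$-path; for fixed $t$, $H_t$ equals some $h_i$ or one of the slices of some $H^{(i)}_t$, each of which is $(\kappa,\lambda)$-continuous. Thus $H$ satisfies all three conditions of Definition~\ref{htpy-2nd-def}, so $f \sim_{(\kappa,\lambda)} g$.

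No step here is really hard, but the one that requires care is verifying that the concatenated $H$ satisfies the third bullet of Definition~\ref{htpy-2nd-def} at every $t$; this reduces to checking that each intermediate time-slice of the one-step homotopies $H^{(i)}$ is already continuous, which follows because those slices are precisely $h_i$ and $h_{i+1}$ (as a one-step homotopy has no strictly intermediate times). So the argument is really just a gluing of the elementary Lemma~\ref{1StepLem} along a path.
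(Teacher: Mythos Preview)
Your proof is correct and follows essentially the same route as the paper's. The paper's own argument is the one-line observation that, by Lemma~\ref{1StepLem}, $\Phi$-adjacency coincides with one-step homotopy, and then both ``homotopic'' and ``in the same $\Phi$-component'' are the transitive closures of that relation; you have simply unpacked this transitivity explicitly (decomposing a homotopy into its time-slices in one direction, and concatenating one-step homotopies in the other).
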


\begin{proof}
Since both homotopy between functions and being 
connected by a path are transitive relations, the 
assertion follows from Lemma~\ref{1StepLem}.
\end{proof}

Let $(S_n,\kappa)$ be any cyclic graph of $n > 4$ points,
with point set $S_n=\{x_i\}_{i=0}^{n-1}$ such that
$x_i \adj_{\kappa} x_j$ if and only if
$|i-j| \in \{1,n-1\}$. Let $r_j \in C(S_n,\kappa)$
be the rotation $r_j(x_i) = x_{(i+j) \mod n}$.
We have the following.

\begin{thm}
{\rm \cite{BxSt20}}
\label{onlyRotate}
If $f \in C(S_n, \kappa)$ such that $f$ and $\id_{S_n}$
are $\kappa$-homotopic, then $f=r_j$ for some $j$,
$0 \le j < n$.
\end{thm}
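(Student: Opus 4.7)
My plan is to reduce the statement to a local claim about $\Phi$-adjacency using Theorem~\ref{homotopyInComponent}. That theorem says $f \sim \id_{S_n}$ if and only if $f$ and $\id_{S_n}$ lie in the same component of $(C(S_n,\kappa), \Phi(\kappa,\kappa))$, so I pick a $\Phi$-path $f_0, f_1, \ldots, f_m$ in $C(S_n,\kappa)$ with $f_0 = \id_{S_n} = r_0$ and $f_m = f$, and I argue by induction on $m$ that every $f_k$ is a rotation. The base case $f_0 = r_0$ is given; the inductive step reduces to the following key claim: if $g \in C(S_n,\kappa)$ and $g \adjeq_{\Phi(\kappa,\kappa)} r_j$, then $g = r_k$ for some $k$ with $k - j \in \{-1,0,1\} \pmod{n}$.

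To prove the key claim, I would use $\Phi$-adjacency to write each value of $g$ as a small perturbation of the corresponding value of $r_j$. Specifically, $g(x_i) \adjeq_\kappa r_j(x_i) = x_{(i+j) \bmod n}$ forces
\[ g(x_i) = x_{(i+j+\epsilon_i) \bmod n}, \qquad \epsilon_i \in \{-1,0,1\}. \]
Continuity of $g$, applied to the pair $x_i \adj_\kappa x_{i+1}$, requires that the index difference $\epsilon_i - \epsilon_{i+1} - 1$ reduce mod $n$ into $\{0,1,n-1\}$. Using the hypothesis $n > 4$ and the fact that $|\epsilon_i - \epsilon_{i+1} - 1| \le 3$, one verifies that this forces $\epsilon_{i+1} - \epsilon_i \in \{-2,-1,0\}$ for every $i$. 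Summing these nonpositive quantities around the cycle (indices mod $n$) yields zero, so each summand vanishes. Hence $\epsilon_i$ is a constant $\epsilon$, and $g = r_{j+\epsilon}$, completing the inductive step.

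The main obstacle I anticipate is the mod-$n$ case analysis at the continuity step, and in particular verifying that $n > 4$ is the exact threshold ruling out the forbidden jump from $\epsilon_i = 1$ to $\epsilon_{i+1} = -1$. For $n = 4$ the index gap $-3$ becomes congruent to $1 \pmod n$, which would be permitted by $\kappa$-adjacency and would produce non-rotation self-maps of $S_4$ homotopic to the identity; this is why the hypothesis $n > 4$ is essential rather than cosmetic. Once this threshold calculation is locked down, the telescoping-around-the-cycle argument and the outer induction along the $\Phi$-path are routine, and the theorem follows.
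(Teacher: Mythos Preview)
The paper does not prove this theorem at all: it is quoted verbatim from~\cite{BxSt20} with no argument given, and is then used only as an input to Example~\ref{cycleAndAdjs}. So there is no ``paper's own proof'' to compare against; your proposal supplies a self-contained argument where the paper simply cites.

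Your argument is essentially correct. Reducing to a one-step $\Phi$-adjacency via Theorem~\ref{homotopyInComponent} (which is proved earlier and does not depend on Theorem~\ref{onlyRotate}, so there is no circularity), writing $g(x_i)=x_{(i+j+\epsilon_i)\bmod n}$ with $\epsilon_i\in\{-1,0,1\}$, using continuity at $x_i\adj x_{i+1}$ to force $\epsilon_{i+1}-\epsilon_i\in\{-2,-1,0\}$ when $n\ge 5$, and then telescoping around the cycle to conclude that all $\epsilon_i$ coincide, is a clean and valid proof.

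One small slip to fix: in your last paragraph you describe the problematic transition for $n=4$ as ``from $\epsilon_i=1$ to $\epsilon_{i+1}=-1$.'' That jump gives $\epsilon_{i+1}-\epsilon_i=-2$, which is already permitted for all $n$. The transition that is forbidden for $n\ge 5$ but allowed for $n=4$ is the \emph{reverse} one, $\epsilon_i=-1$ to $\epsilon_{i+1}=1$, since then $\epsilon_i-\epsilon_{i+1}-1=-3\equiv 1\pmod 4$. This does not affect your main argument, only the illustrative remark about why $n>4$ is needed.
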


We do not get a similar outcome if we substitute
$\Psi$ for $\Phi$ in Theorem~\ref{homotopyInComponent},
as shown in the following.

\begin{exl}
\label{cycleAndAdjs}
Let $(S_n,\kappa)$ be any cyclic graph of $n > 4$ points.
Then all the rotations $r_j$ are homotopic. However,
no distinct $r_j$ and $r_k$ belong to the same
component of $(S_n^{S_n}, \Psi)$.
\end{exl}

\begin{proof}
Without loss of generality, $k = j + m < n$ for some $m$,
$0 < m < n-j$. Then
$H: S_n \times [0,m]_{\Z} \to S_n$, defined by
$H(x_i,t) = r_{j+t}(x_i)$, is a homotopy from
$r_j$ to $r_k$.

It follows from Theorem~\ref{onlyRotate} that every
induced map $H_t$ of $H$ for $t \in [0,m]_{\Z}$,
and in particular, $H_1$, is a rotation.
\begin{itemize}
    \item For $1 \le m<n-2$, $r_j(x_0)=x_j$ and 
          $r_k(x_1)=x_{(j+m+1) \mod n}$ are not $\kappa$-adjacent.
    \item For $m=n-2$, we cannot follow the pattern 
          used above, since
          \[ r_k(x_1) = r_{j+n-2}(x_1) = 
             r_{j-2 \mod n}(x_1) = x_{j-1 \mod n}
          \]
          is adjacent to $r_j(x_0)$. However,
          \[ r_k(x_0) = r_{j+n-2}(x_0) = 
             r_{j-2 \mod n}(x_0) = x_{j-2 \mod n}
          \]
          is neither adjacent nor equal to $r_j(x_0)$.
    \item For $m=n-1$ we must have $j=0$. Therefore,
          $r_j(x_1)=x_1$ and $r_k(x_0)=x_{n-1}$ are
          not $\kappa$-adjacent.
\end{itemize}
In every case, $r_j$ and $r_k$ are not $\Psi$-adjacent.
This completes the proof.
\end{proof}

\begin{thm}
Let $(X,\kappa)$ and $(Y,\lambda)$ be digital images.
Let $W$ be a $\lambda$-retract of $Y$. Then
$(W^X, \Phi(\kappa,\lambda))$ is a retract of
$(Y^X, \Phi(\kappa,\lambda))$.
\end{thm}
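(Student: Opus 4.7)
The plan is to build a retraction $R: Y^X \to W^X$ directly from the given $\lambda$-retraction $r: Y \to W$ by postcomposition. Specifically, I would set $R(f) = r \circ f$ for each $f \in Y^X$, and then verify in turn that (i) $R$ lands in $W^X$, (ii) $R$ is $\Phi(\kappa,\lambda)$-continuous, and (iii) $R$ fixes $W^X$ pointwise.

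For (i), I would note that if $f : X \to Y$ is $(\kappa,\lambda)$-continuous and $r : Y \to W$ is $\lambda$-continuous, then by Proposition~\ref{compoPreservesContin} the composite $r \circ f : X \to W$ is $(\kappa,\lambda)$-continuous, so $R(f) \in W^X$. For (iii), note that if $f \in W^X$, then $f(x) \in W$ for every $x \in X$, and since $r$ restricts to the identity on $W$ we get $(r \circ f)(x) = f(x)$ for every $x$, hence $R(f) = f$.

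The only step with any content is (ii). Suppose $f, g \in Y^X$ with $f \adj_{\Phi(\kappa,\lambda)} g$, so that $f(x) \adjeq_{\lambda} g(x)$ for every $x \in X$. Since $r$ is $\lambda$-continuous, Theorem~\ref{continuityPreserveAdj} gives $r(f(x)) \adjeq_{\lambda} r(g(x))$ for each such $x$; that is, $R(f)(x) \adjeq_{\lambda} R(g)(x)$ for all $x \in X$. This is precisely the condition $R(f) \adjeq_{\Phi(\kappa,\lambda)} R(g)$ in $W^X$, which establishes continuity.

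There is no real obstacle here; the argument is a direct transcription of the retraction on the target space to a retraction on the function graph via postcomposition, and the $\Phi$-adjacency is tailor-made for pointwise arguments of this kind. The only small thing to be careful about is that the $\lambda$-adjacency on $W$ is the one inherited from $Y$, so that $\Phi(\kappa,\lambda)$ on $W^X$ and on $Y^X$ are compatible—this is implicit in the definition of a retract.
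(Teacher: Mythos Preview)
Your proposal is correct and follows the same approach as the paper: define the retraction on the function graph by postcomposition with the given retraction $r$, i.e., $R(f)=r\circ f$. The paper's proof is terser---it checks that $r\circ f$ is continuous (your step (i)) via Proposition~\ref{compoPreservesContin} and that $R$ fixes $W^X$ (your step (iii)), then says ``The assertion follows''---so your explicit verification of $\Phi$-continuity in step (ii) is, if anything, more complete than the original.
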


\begin{proof}
Let $r: Y \to W$ be a $\lambda$-retraction. Then
for every $(\kappa,\lambda)$-continuous $f: X \to Y$,
$r \circ f:X \to W$ is continuous by 
Proposition~\ref{compoPreservesContin}. Further,
if $f(X) \subset W$ then $r \circ f = f$. The assertion
follows.
\end{proof}

We present results that link the topics of 
hyperspaces and function graphs.

\begin{thm}
Let $f \adj_{\Phi(\kappa,\lambda)} g$ in $Y^X$. Then for
$A \in 2^X$, $f(A) \adjeq_{\lambda'} g(A)$.
\end{thm}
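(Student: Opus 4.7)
The plan is to unpack both definitions and show directly that the desired adjacency condition on $2^Y$ holds. Recall that $f \adj_{\Phi(\kappa,\lambda)} g$ means $f(x) \adjeq_{\lambda} g(x)$ for every $x \in X$, while $U \adj_{\lambda'} V$ (for $U \neq V$ in $2^Y$) requires that for each $u \in U$ there exist $v_0 \in V$ with $u \adjeq_{\lambda} v_0$, and symmetrically for each $v \in V$. The statement uses $\adjeq_{\lambda'}$, so we are allowed the degenerate case $f(A) = g(A)$ without further argument.

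First I would dispose of the trivial case: if $f(A) = g(A)$, then $f(A) \adjeq_{\lambda'} g(A)$ by definition of $\adjeq$. Otherwise, I need to check both halves of Definition~\ref{hyperAdjDef}.

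For the forward direction, take an arbitrary $y \in f(A)$ and write $y = f(a)$ for some $a \in A$. Set $y_0 = g(a)$, which lies in $g(A)$. By the $\Phi$-adjacency hypothesis applied at the point $a$, we have $f(a) \adjeq_{\lambda} g(a)$, that is $y \adjeq_{\lambda} y_0$. The reverse direction is completely symmetric: given $z \in g(A)$, write $z = g(a')$ with $a' \in A$, set $z_0 = f(a') \in f(A)$, and again $\Phi$-adjacency at $a'$ gives $z_0 \adjeq_{\lambda} z$. Together these verify $f(A) \adj_{\lambda'} g(A)$, hence $f(A) \adjeq_{\lambda'} g(A)$.

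There is no real obstacle here; the argument is essentially a direct unfolding of definitions, with the only mild subtlety being the need to handle the case $f(A) = g(A)$ separately so that the $A \neq B$ clause in Definition~\ref{hyperAdjDef} is respected. No use of connectedness, continuity of $f$ or $g$, or finiteness beyond what is already built into $2^X$ is required.
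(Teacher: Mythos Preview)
Your proof is correct and follows essentially the same approach as the paper's: both arguments pick a preimage in $A$ for an arbitrary element of $f(A)$ (respectively $g(A)$) and invoke the pointwise $\Phi$-adjacency to produce the required neighbor. Your version is slightly more careful in explicitly separating the case $f(A)=g(A)$ and in specifying that the preimage lies in $A$, but the underlying argument is the same direct unfolding of the definitions.
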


\begin{proof}
Let $y_f \in f(A)$. Let $x_f \in f^{-1}(y_f)$. Then
$y_f=f(x_f) \adjeq_{\lambda} g(x_f)$. Similarly, given
$y_g \in g(A)$, there exists $x_g \in g^{-1}(y_g)$ such that
$f(x_g) \adjeq_{\lambda} g(x_g) = y_g$. It follows that
$f(A) \adjeq_{\lambda'} g(A)$.
\end{proof}

\begin{thm}
\label{htpyPreserved}
Let $(W,\kappa)$, $(X,\lambda)$, and $(Y,\mu)$ be digital images.
Suppose $f,g \in Y^X$ are $(\lambda,\mu)$-continuous. If
$f$ and $g$ are
\begin{itemize}
    \item (strongly) $(\lambda,\mu)$-homotopic;
    \item (strongly) pointed $(\lambda,\mu)$-homotopic
          with $x_0$ held fixed,
\end{itemize}
then the induced maps 
$f_*, g_*: (X^W, \Phi(\kappa,\lambda)) \to (Y^W, \Phi(\kappa,\mu))$,
defined for all $F \in X^W$ by
\[ f_*(F) = f \circ F, ~~~~~ g_*(F) = g \circ F,
\]
are $(\Phi(\kappa,\lambda), \Phi(\kappa,\mu))$-continuous and, respectively, $f_*$ and $g_*$ are,
\begin{itemize}
    \item (strongly)
    $(\Phi(\kappa,\lambda),\Phi(\kappa,\mu))$-homotopic;
    \item (strongly) pointed
    $(\Phi(\kappa,\lambda),\Phi(\kappa,\mu))$-homotopic 
          with the constant function $\hat{x}_0$ 
          held fixed.
\end{itemize}
\end{thm}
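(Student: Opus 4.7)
The plan is to lift $H$ pointwise: in every case a homotopy from $f_*$ to $g_*$ will be obtained from a homotopy $H: X \times [0,m]_\Z \to Y$ from $f$ to $g$ by setting $H_*(F,t)(w) = H(F(w),t)$. First, for continuity of $f_*$, suppose $F_1 \adj_{\Phi(\kappa,\lambda)} F_2$ in $X^W$, so $F_1(w) \adjeq_\lambda F_2(w)$ for every $w \in W$; applying the $(\lambda,\mu)$-continuity of $f$ pointwise gives $f(F_1(w)) \adjeq_\mu f(F_2(w))$, which is exactly $f_*(F_1) \adjeq_{\Phi(\kappa,\mu)} f_*(F_2)$. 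The same argument handles $g_*$, and each $f_*(F) = f \circ F$ lies in $Y^W$ by Proposition~\ref{compoPreservesContin}, so $f_*$ and $g_*$ are well-defined with the stated codomain.

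Next I would verify the three defining conditions for $H_*$ in the ordinary homotopy case. The endpoints are immediate: $H_*(F,0)(w) = f(F(w)) = f_*(F)(w)$, and similarly at $t=m$. For fixed $F$, the induced $t$-slice is a $(c_1,\Phi(\kappa,\mu))$-path because, for each $w$, $H(F(w),t) \adjeq_\mu H(F(w),t+1)$ by the path property of $H_{F(w)}$, and $\Phi$-adjacency in $Y^W$ is precisely pointwise $\adjeq_\mu$. For fixed $t$, the induced $F$-slice is $(\Phi(\kappa,\lambda),\Phi(\kappa,\mu))$-continuous: if $F_1 \adj_{\Phi(\kappa,\lambda)} F_2$, pointwise application of the $(\lambda,\mu)$-continuity of $H_t$ again delivers the required $\Phi$-adjacency of $H_*(F_1,t)$ and $H_*(F_2,t)$. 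One also notes $H_*(F,t) = H_t \circ F \in Y^W$ by Proposition~\ref{compoPreservesContin}.

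For the strong version, the same $H_*$ works: given $F_1 \adjeq_{\Phi(\kappa,\lambda)} F_2$ and $t_0 \adjeq_{c_1} t_1$, we have $F_1(w) \adjeq_\lambda F_2(w)$ for each $w$, so the strong homotopy condition on $H$ yields $H(F_1(w),t_0) \adjeq_\mu H(F_2(w),t_1)$ for every $w$, i.e.\ $H_*(F_1,t_0) \adjeq_{\Phi(\kappa,\mu)} H_*(F_2,t_1)$. For the pointed case, if $H(x_0,t) = y_0$ for all $t$, then the constant map $\hat{x}_0 \in X^W$ with value $x_0$ satisfies $H_*(\hat{x}_0,t)(w) = H(x_0,t) = y_0$ for all $w$ and $t$, so $H_*$ holds $\hat{x}_0$ fixed at the constant function $\hat{y}_0$; this also handles the strong pointed case since strong pointedness adds no further condition beyond what has already been verified.

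I do not expect any substantial obstacle: the proof is a systematic translation of pointwise adjacencies in $Y^W$ through the definition of $\Phi$, together with repeated invocations of Proposition~\ref{compoPreservesContin} to keep all intermediate functions in the correct function graph. The only point demanding care is to use the appropriate sub-condition of the (strong or ordinary) homotopy definition on $H$ when verifying each sub-condition for $H_*$, so that the variable being ``moved'' in $X$ is distinguished cleanly from the variable being moved in $[0,m]_\Z$.
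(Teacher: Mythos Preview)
Your proposal is correct and follows essentially the same approach as the paper: both define $H_*(F,t)(w)=H(F(w),t)$ and verify the homotopy conditions by translating pointwise $\mu$-adjacencies into $\Phi(\kappa,\mu)$-adjacencies, with continuity of $f_*$ handled identically. If anything, you are slightly more explicit than the paper in checking that $H_*(F,t)\in Y^W$ via Proposition~\ref{compoPreservesContin} and in spelling out the strong and pointed variants, which the paper dismisses with ``the other assertions are proven similarly.''
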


\begin{proof}
Let $F, G \in X^W$ be $(\kappa,\lambda)$-continuous
with $F \adj_{\Phi(\kappa, \lambda)} G$ and
let $w \in W$. Then 
\[ F(w) \adjeq_{\lambda} G(w), \mbox{ so }
   f_*(F)(w) = f \circ F(w) \adjeq_{\mu} f \circ G(w) =
   f_*(G)(w),
\]
so $f_*(F) \adjeq_{\Phi(\kappa,\mu)} f_*(G)$, hence
$f_*$ is continuous. Similarly, $g_*$ is continuous.

We proceed with a proof for homotopic maps;
the other assertions are proven similarly. 

Let $H: X \times [0,n]_{\Z} \to Y$ be a 
$(\lambda,\mu)$-homotopy from $f$ to $g$. Let 
$H_*: (X^W, \Phi(\kappa,\lambda)) \times [0,n]_{\Z} \to (Y^W, \Phi(\kappa,\mu))$ be given by
$H_*(F,t)(x) = H(F(x),t)$.
We have the following.
\begin{itemize}
    \item $H_*(F,0)(x) = H(F(x),0) = f(F(x))=f_*(F)(x)$, so
          $H_*|_{t=0} = f_*$; and
          $H_*(F,n)(x) = H(F(x),n) = g(F(x))=g_*(F)(x)$, so
          $H_*|_{t=n} = g_*$.
    \item Given $F \in X^W$,
          the induced function $H_{*,F}: [0,n]_{\Z} \to Y^W$
          given by $H_{*,F}(t)(w)=H(F(w), t)$ satisfies, for
          $t_0 \adj_{c_1} t_1$ in $[0,n]_{\Z}$,
          \[H_{*,F}(t_0)(w) = H(F(w), t_0) \adjeq_{\mu} 
            H(F(w), t_1)(w) = H_{*,F}(t_1)(w),
          \]
          so $H_{*,F}$ is $(c_1,\Phi(\lambda,\mu))$-continuous.
    \item Given $t \in [0,n]_{\Z}$, the induced function 
          $H_{*,t}: X^W \to Y^W$ given by
          $H_{*,t}(F)(w)=H(F(w), t)$ satisfies, for
          $F_0 \adj_{\Phi(\kappa,\lambda)} F_1$ in $X^W$,
          \[ H_{*,t}(F_0)(w)= H(F_0(w),t) \adjeq_{\mu} 
             H(F_1(w),t) = H_{*,t}(F_1)(w).
          \]
          Therefore, $H_{*,t}$ is 
          $(\Phi(\kappa,\lambda), \Phi(\kappa,\mu))$-continuous.
\end{itemize}
Therefore, $H_*$ is a homotopy from $f_*$ to $g_*$.
\end{proof}

\begin{prop}
\label{compositionInduced}
Let $(V,\kappa)$, $(W,\lambda)$, $(X,\mu)$, $(Y,\nu)$ be digital
images. Let
$f: (W,\lambda) \to (X,\mu)$ and $g: (X,\mu) \to (Y,\nu)$
be continuous. Consider the induced maps
$f_*: W^V \to X^V$ and $g_*: X^W \to Y^W$. We have
$(g \circ f)_* = g_* \circ f_*: W^V \to Y^V$.
\end{prop}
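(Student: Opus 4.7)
The plan is to verify the identity pointwise on an arbitrary element of the domain $W^V$, leveraging nothing more than associativity of ordinary function composition. Before doing so, I would note a small typographical point in the statement: for $g_* \circ f_*$ to be composable with $f_*: W^V \to X^V$, the induced map $g_*$ should be read as $g_*: X^V \to Y^V$, $G \mapsto g \circ G$, so that the target $Y^V$ in the conclusion matches. With that convention in place, both sides of the claimed equality are functions from $W^V$ to $Y^V$, so the comparison is sensible.

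First I would fix an arbitrary $F \in W^V$, i.e., a $(\kappa,\lambda)$-continuous map $F: V \to W$. By Proposition~\ref{compoPreservesContin}, $f \circ F: V \to X$ is $(\kappa,\mu)$-continuous, so $f_*(F) \in X^V$; and $g \circ (f \circ F): V \to Y$ is $(\kappa,\nu)$-continuous, so the application of $g_*$ is legal and lands in $Y^V$. This confirms the type-checking, and also ensures $(g \circ f) \circ F \in Y^V$.

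The heart of the proof is the chain of equalities
\[
(g \circ f)_*(F) \;=\; (g \circ f) \circ F \;=\; g \circ (f \circ F) \;=\; g \circ f_*(F) \;=\; g_*(f_*(F)) \;=\; (g_* \circ f_*)(F),
\]
where the second equality is associativity of composition of (set-theoretic) functions, and the others are applications of the definitions of $f_*$ and $g_*$. Since $F \in W^V$ was arbitrary, the two maps $(g \circ f)_*$ and $g_* \circ f_*$ agree on all of $W^V$ and are therefore equal.

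There is no real obstacle here: the result is a direct translation of the associativity of composition, exactly analogous to Proposition~\ref{compositionProp} for the hyperspace construction. The only mild care needed is the bookkeeping that each intermediate composite remains continuous (so that the induced maps are well-defined on the function graphs), which is handled uniformly by Proposition~\ref{compoPreservesContin}.
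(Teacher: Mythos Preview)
Your proof is correct and follows essentially the same approach as the paper's own proof, namely the pointwise chain of equalities reducing to associativity of composition. Your observation that the statement's $g_*: X^W \to Y^W$ should read $g_*: X^V \to Y^V$ is a valid catch, and your added continuity bookkeeping via Proposition~\ref{compoPreservesContin} is a reasonable bit of extra care the paper leaves implicit.
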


\begin{proof}
Given $F: V \to W$, we have
\[ g_* \circ f_* (F) = g_* (f \circ F) = g \circ (f \circ F) =
   (g \circ f) \circ F = (g \circ f)_*(F).
\]
The assertion follows.
\end{proof}

\begin{cor}
\label{htpyTypeFunctGraph}
Let $(X,\kappa)$ and $(Y, \lambda)$ be digital images.
Suppose $(X,\kappa)$ and $(Y, \lambda)$ have
the same (strong) (pointed) homotopy type.
Then $(X^X,\Phi(\kappa,\kappa))$ and
$(Y^Y, \Phi(\lambda,\lambda))$ have
the same (strong) (pointed) homotopy type, respectively.
\end{cor}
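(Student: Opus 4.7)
The plan is the classical conjugation argument adapted to digital function graphs. Let $f: (X,\kappa) \to (Y,\lambda)$ and $g: (Y,\lambda) \to (X,\kappa)$ realize the assumed homotopy equivalence, and define
\[
\alpha: X^X \to Y^Y, \quad \alpha(F) = f \circ F \circ g, \qquad \beta: Y^Y \to X^X, \quad \beta(G) = g \circ G \circ f.
\]
Both are well-defined by Proposition~\ref{compoPreservesContin}. I would factor $\alpha$ as $\alpha = g^{\#} \circ f_{\#}$, where $f_{\#}$ is post-composition by $f$ (continuous by Theorem~\ref{htpyPreserved}, since $f_{\#}$ is exactly the induced map treated there) and $g^{\#}$ is pre-composition by $g$; continuity of pre-composition is a direct pointwise check from the definition of $\Phi$, namely $H_0 \adjeq_{\Phi(\kappa,\lambda)} H_1$ implies $H_0(g(y)) \adjeq_{\lambda} H_1(g(y))$ for every $y \in Y$. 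The map $\beta$ is handled symmetrically.

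For the homotopy $\beta \circ \alpha \sim \id_{X^X}$, I would set $h = g \circ f$ so that $(\beta \circ \alpha)(F) = h \circ F \circ h$, choose a homotopy $H: X \times [0,n]_{\Z} \to X$ from $\id_X$ to $h$, and concatenate two stages,
\[
K_1(F, t)(x) = H(F(x), t), \qquad K_2(F, t)(x) = h\bigl(F(H(x, t))\bigr).
\]
Stage $K_1$ deforms $F$ to $h \circ F$ by post-composing through $H$; stage $K_2$ then deforms $h \circ F$ to $h \circ F \circ h$ by pre-composing through $H$. Separate continuity in $t$ and in $F$ reduces to continuity of the slices $H_x$ and $H_t$, continuity of $F$ and $h$, and the pointwise character of $\Phi$-adjacency. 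The argument for $\alpha \circ \beta \sim \id_{Y^Y}$ is entirely symmetric.

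The pointed case is routine: $\alpha(\widehat{x_0}) = \widehat{y_0}$ and $\beta(\widehat{y_0}) = \widehat{x_0}$, and both $K_1, K_2$ fix the constant function $\widehat{x_0}$ whenever $H$ holds $x_0$ fixed, because then $K_i(\widehat{x_0}, t)(x) = x_0$ for every $x \in X$ and every $t$. The main obstacle is the strong case of the pre-composition stage $K_2$: a strong $\Phi$-homotopy requires, for $F_0 \adjeq_{\Phi} F_1$ and $t_0 \adjeq_{c_1} t_1$, that $F_0(H(x, t_0)) \adjeq_{\kappa} F_1(H(x, t_1))$ for each $x$, i.e., a comparison of $F_0$ and $F_1$ at distinct though adjacent input points. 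Pointwise $\Phi$-adjacency only compares values at identical points, while continuity of $F_0$ and $\Phi$-adjacency jointly yield only a two-step chain $F_0(H(x,t_0)) \adjeq F_0(H(x,t_1)) \adjeq F_1(H(x,t_1))$. Bridging this gap — for instance by exploiting the specific form $h \circ F$ that actually appears as input to $K_2$ in the concatenation, rather than arbitrary $F_0, F_1$ — is the delicate bookkeeping a careful proof must address.
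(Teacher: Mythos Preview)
Your conjugation approach is the right one, and is in fact more complete than the paper's own argument. The paper applies only Theorem~\ref{htpyPreserved} and Proposition~\ref{compositionInduced} to post-composition: it records $(g\circ f)_* = g_*\circ f_* \sim \id_{X^X}$ and $(f\circ g)_* = f_*\circ g_* \sim \id_{Y^Y}$ and concludes. But the $f_*, g_*$ in the first display are maps $X^X \to Y^X \to X^X$, while in the second they are maps $Y^Y \to X^Y \to Y^Y$; these two pairs are not mutual homotopy inverses and do not by themselves link $X^X$ with $Y^Y$. Your maps $\alpha(F)=f\circ F\circ g$ and $\beta(G)=g\circ G\circ f$ do, and your two-stage homotopy $K_1$ (post-composition through $H$) followed by $K_2$ (pre-composition through $H$) correctly verifies $\beta\circ\alpha \sim \id_{X^X}$ in the ordinary and pointed cases. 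What you add beyond the paper is precisely the pre-composition half, together with the explicit check that pre-composition is $\Phi$-continuous.

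Your caution about the strong case is warranted, but the fix you hint at does not work. The obstruction is exactly that $\Phi$-adjacency compares $F_0$ and $F_1$ only at identical inputs, so for $K_2$ one obtains only the two-step chain $F_0(H(x,t_0)) \adjeq_\kappa F_0(H(x,t_1)) \adjeq_\kappa F_1(H(x,t_1))$. Your suggestion to ``exploit the specific form $h\circ F$'' cannot rescue this: a strong homotopy on $(X^X,\Phi)$ must satisfy the diagonal adjacency condition for \emph{every} pair $F_0 \adjeq_\Phi F_1$ in its domain, not merely for pairs arising as outputs of the previous stage. The paper handles the strong variant only by the phrase ``the other assertions are established similarly,'' which sidesteps the same difficulty. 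So for the strong case neither argument is complete as written; closing the gap would require either a homotopy that avoids pre-composing through a moving $H(\cdot,t)$, or a separate lemma showing that pre-composition by strongly homotopic maps yields strongly $\Phi$-homotopic induced maps --- a statement which, as your own analysis shows, is not immediate from the definitions.
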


\begin{proof}
We give a proof for ``same homotopy type"; the other assertions
are established similarly (in the pointed cases, if $x_0 \in X$
and $y_0 \in Y$ are the basepoints of the assumption,
then the constant maps $\hat{x_0} \in X^X$ and $\hat{y_0} \in Y^Y$
are the basepoints of the conclusion).

If $(X,\kappa)$ and $(Y, \lambda)$ have the same homotopy type,
then there are continuous functions $f: X \to Y$ and
$g: Y \to X$ such that $g \circ f \sim_{\kappa,\kappa} \id_X$
and $f \circ g \sim_{\lambda,\lambda} \id_Y$. By
Theorem~\ref{htpyPreserved} and Proposition~\ref{compositionInduced},
we have
\[ (g \circ f)_* \sim_{\Phi(\kappa,\kappa)} (\id_X)_* = \id_{X^X}
\]
and similarly, 
\[ (f \circ g)_* \sim_{\Phi(\lambda,\lambda)} (\id_Y)_* = \id_{Y^Y}.
\]
Thus, $(X^X, \Phi(\kappa,\kappa))$ and
$(Y^Y, \Phi(\lambda,\lambda))$
have the same homotopy type.
\end{proof}

\begin{cor}
Let $(X,\kappa)$ be a digital image. 
Suppose $(X,\kappa)$ is
\begin{itemize}
    \item (strongly) contractible;
    \item (strongly) pointed contractible with basepoint $x_0$.
\end{itemize}
Then, respectively, $(X^X, \Phi(\kappa,\kappa))$ is
\begin{itemize}
    \item (strongly) contractible;
    \item (strongly) pointed contractible with the constant function
          $\hat{x_0}$ as basepoint.
\end{itemize}
\end{cor}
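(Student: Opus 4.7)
The plan is to deduce this corollary as an immediate consequence of Corollary~\ref{htpyTypeFunctGraph} by viewing contractibility as a special case of homotopy equivalence with a one-point space. Recall that $(X,\kappa)$ is (strongly) (pointed) contractible precisely when it has the same (strong) (pointed) homotopy type as a digital image $P = \{p\}$ consisting of a single point (with basepoint $p$ in the pointed case). So the entire task reduces to applying the already-proved corollary with $(Y,\lambda) = (P,\kappa_P)$.

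First I would record the observation that the function graph of the one-point space is itself a one-point space: the only function $P \to P$ is $\id_P$, hence $P^P = \{\id_P\}$ is a single point in $(P^P, \Phi)$. In particular, $(P^P, \Phi)$ is (strongly) (pointed) contractible trivially, and its unique point plays the role of basepoint in the pointed setting.

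Next I would invoke Corollary~\ref{htpyTypeFunctGraph}: if $(X,\kappa)$ and $(P,\kappa_P)$ have the same (strong) (pointed) homotopy type, then $(X^X, \Phi(\kappa,\kappa))$ and $(P^P, \Phi)$ have the same (strong) (pointed) homotopy type. Combined with the previous step, this says $(X^X, \Phi(\kappa,\kappa))$ has the (strong) homotopy type of a one-point space, which is exactly the definition of being (strongly) contractible. In the pointed case, the proof of Corollary~\ref{htpyTypeFunctGraph} identifies the basepoint of $X^X$ as the constant function $\hat{x}_0$ (where $x_0$ is the basepoint of $X$), giving the conclusion precisely as stated.

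There is no real obstacle here; the work was already absorbed into Theorem~\ref{htpyPreserved} and Corollary~\ref{htpyTypeFunctGraph}. The only mild subtlety is making sure the pointed homotopy's basepoint is tracked correctly, and this is handled by the ``constant function $\hat{x}_0$'' clause in the hypothesis of Theorem~\ref{htpyPreserved}, which is inherited through the chain of implications. Thus the proof can be written in just a few lines that simply invoke Corollary~\ref{htpyTypeFunctGraph} with $Y=P$.
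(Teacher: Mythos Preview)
Your proposal is correct and follows essentially the same approach as the paper's proof: both observe that (strong) (pointed) contractibility means having the (strong) (pointed) homotopy type of a one-point image and then invoke Corollary~\ref{htpyTypeFunctGraph}. You simply spell out a bit more explicitly that $P^P$ is a singleton and that the basepoint becomes $\hat{x}_0$, which the paper leaves implicit.
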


\begin{proof}
We give a proof for ``contractible"; the other assertions follow
similarly.

Since ``contractible" means homotopy equivalent to a digital
image with a single point, the assertion follows from
Corollary~\ref{htpyTypeFunctGraph}.
\end{proof}

\section{Connectedness in digital hyperspaces}
\label{connectSec}
We have the following.

\begin{prop}
\label{amalgConn}
Let $(X,\kappa)$ be a digital image. Let $W$ be a nonempty
$\kappa'$-connected subset of $K(X)$. Then
$W' = \bigcup_{Y \in W} Y$ is a $\kappa$-connected subset of $X$.
\end{prop}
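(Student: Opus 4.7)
The plan is to take arbitrary points $p,q \in W'$ and build a $\kappa$-path from $p$ to $q$ that stays in $W'$, by stringing together short "cross-over" steps between consecutive sets on a $\kappa'$-path in $W$, with internal detours through each set using its own $\kappa$-connectedness.

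First I would pick $p,q \in W'$ and choose $Y_p, Y_q \in W$ with $p \in Y_p$, $q \in Y_q$. Since $W$ is $\kappa'$-connected, there is a $\kappa'$-path $A_0 = Y_p, A_1, \ldots, A_m = Y_q$ in $W$, so for each $i$ either $A_i = A_{i+1}$ or $A_i \adj_{\kappa'} A_{i+1}$. Every $A_i$ lies in $W$, hence is $\kappa$-connected, and every $A_i \subset W'$.

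The key construction is to choose inductively a sequence of points $b_{-1}, b_0, \ldots, b_{m-1}$ with $b_{-1}=p$, $b_i \in A_{i+1}$, and $b_{i-1} \adjeq_{\kappa} b_i$. At step $i$, if $A_i = A_{i+1}$, set $b_i = b_{i-1}$. If $A_i \adj_{\kappa'} A_{i+1}$, then by Definition~\ref{hyperAdjDef} applied to the point $b_{i-1} \in A_i$, there exists $b_i \in A_{i+1}$ with $b_{i-1} \adjeq_{\kappa} b_i$. This yields the chain $p \adjeq_{\kappa} b_0 \adjeq_{\kappa} \cdots \adjeq_{\kappa} b_{m-1}$ with $b_{m-1} \in A_m = Y_q$. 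Then, since $Y_q$ is $\kappa$-connected, there is a $\kappa$-path from $b_{m-1}$ to $q$ inside $Y_q \subset W'$; similarly, for $i \geq 0$, if I also want to ``anchor'' the construction at each $A_i$ I can interleave a $\kappa$-path inside $A_{i}$ from the arrival point to the point $b_{i-1}$ used to jump out (though in fact this is not even needed — one can choose the jump-out point freely, since the $\kappa'$-adjacency applies to \emph{every} point of $A_i$). Concatenating gives a $\kappa$-path from $p$ to $q$ contained in $A_0 \cup \cdots \cup A_m \subset W'$, so $W'$ is $\kappa$-connected.

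There is no real obstacle; the argument is essentially bookkeeping. The only subtle point is remembering that a $\kappa'$-path allows consecutive terms to be equal, and that the $\kappa'$-adjacency condition is strong enough to guarantee a cross-over step starting from \emph{any} prescribed point of $A_i$ — not just from some convenient point — which is what makes the inductive construction go through without needing to rearrange previously chosen path segments.
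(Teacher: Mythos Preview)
Your proof is correct and follows the same overall strategy as the paper: pick two points of $W'$, find a $\kappa'$-path in $W$ joining sets containing them, and chain $\kappa$-adjacencies along that path to produce a $\kappa$-path in $W'$. The execution differs slightly. The paper, at each step, chooses an arbitrary crossing pair $p_i \in W_i$, $q_{i+1} \in W_{i+1}$ with $p_i \adjeq_\kappa q_{i+1}$, and then invokes the $\kappa$-connectedness of \emph{every} intermediate $W_i$ to build an internal path from the entry point $q_i$ to the next exit point $p_i$. Your inductive choice of the $b_i$ exploits the full strength of Definition~\ref{hyperAdjDef} --- that \emph{every} point of $A_i$ has a neighbor (or equals one) in $A_{i+1}$ --- so you can always jump directly from where you just landed; connectedness is then needed only for the terminal set $A_m$ to reach $q$. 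This is a modest but genuine simplification of the paper's argument.
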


\begin{proof}
Let $x_0,x_1 \in W'$. There exist $Y_i \in W$ such that
$x_i \in Y_i \in K(X, \kappa')$. Since $W$ is $\kappa'$-connected, there
exists a $\kappa'$-path $\{W_i\}_{i=0}^n \subset W$ from $Y_0$ to $Y_1$,
i.e., $W_0 = Y_0$, $W_i \adj_{\kappa'} W_{i+1}$, and $W_n = Y_1$.

By Definition~\ref{hyperAdjDef}, there exist $p_i \in W_i$, 
$q_{i+1} \in W_{i+i}$ such that
$p_i \adjeq_{\kappa} q_{i+1}$. As each $W_i$ is $\kappa$-connected,
there exist $\kappa$-paths $P_0 \subset Y_0=W_0$ from $x_0$ to $p_0$;
$P_i \subset W_i$ from $q_i$ to $p_{i+1}$, $1 \le i < n$; and 
$P_n \subset W_n = Y_1$ from $p_n$ to $x_1$.

Then $\bigcup_{i=0}^n P_i$ is a $\kappa$-path in $W'$ from
$x_0$ to $x_1$. It follows that $W'$ is $\kappa$-connected.
\end{proof}

\begin{prop}
\label{disconnXimpliesdisconnKX}
Let $C_0$ and $C_1$ be distinct components of $(X,\kappa)$. Let 
\begin{equation}
    \label{endsInComponents}
    A_i \in K(C_i, \kappa') \mbox{ for } i \in \{0,1\}.
\end{equation}
Then $A_0$ and $A_1$ are points of distinct components of $K(X,\kappa')$.
\end{prop}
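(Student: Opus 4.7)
The plan is to argue by contradiction, leveraging Proposition~\ref{amalgConn} as the main tool. Suppose, toward a contradiction, that $A_0$ and $A_1$ lie in the same $\kappa'$-component $W$ of $K(X,\kappa')$. Then $W$ is a nonempty $\kappa'$-connected subset of $K(X)$ containing both $A_0$ and $A_1$, so Proposition~\ref{amalgConn} applies and tells us that the union $W' = \bigcup_{Y \in W} Y$ is a $\kappa$-connected subset of $X$.

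Next, I would choose points $x_0 \in A_0$ and $x_1 \in A_1$. By hypothesis (\ref{endsInComponents}), we have $x_0 \in C_0$ and $x_1 \in C_1$, and also $x_0, x_1 \in W' \subset X$. Since $W'$ is $\kappa$-connected, there exists a $\kappa$-path in $W'$ from $x_0$ to $x_1$. Viewed inside $X$, this path certifies that $x_0$ and $x_1$ lie in a single $\kappa$-connected subset of $X$, so they belong to the same $\kappa$-component of $X$. But $x_0 \in C_0$ and $x_1 \in C_1$ with $C_0 \neq C_1$, contradicting the fact that the components of $(X,\kappa)$ partition $X$.

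I do not anticipate a serious obstacle: the statement is essentially an immediate consequence of Proposition~\ref{amalgConn} combined with the standard fact that distinct components of a graph are disjoint and pairwise non-adjacent. The only mild subtlety is to notice that one needs $A_0$ and $A_1$ to be \emph{nonempty} in order to pick $x_0$ and $x_1$, which is automatic from the definition of $2^X$ and hence of $K(X)$.
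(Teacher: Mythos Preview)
Your argument is correct. It differs from the paper's proof in that you invoke Proposition~\ref{amalgConn} as a black box, applying it to the entire $\kappa'$-component $W$ to get a $\kappa$-connected union $W'$ and then deriving the contradiction at the level of points in $X$. The paper instead works directly with a $\kappa'$-path $\{B_j\}_{j=0}^n$ from $A_0$ to $A_1$: it locates the smallest index $k$ with $B_k \subset C_0$ but $B_{k+1} \not\subset C_0$, and observes that $B_k \adj_{\kappa'} B_{k+1}$ forces $B_k \cup B_{k+1}$ to be $\kappa$-connected and hence contained in $C_0$, a contradiction. Your route is slightly more economical since it reuses Proposition~\ref{amalgConn}, while the paper's argument is self-contained and in effect re-proves a special case of that proposition along the path. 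Both are short and neither offers a real advantage over the other.
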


\begin{proof}
Were $A_0$ and $A_1$ in the same component of $K(X,\kappa')$, 
then there would exist a path $\{B_j\}_{j=0}^n \subset K(X,\kappa')$
such that $A_0 = B_0$, 
\begin{equation}
\label{B_jAdjacencies}
    B_j \adj_{\kappa'} B_{j+1} \mbox{ for } 1 \le j < n,
\end{equation} 
and $B_n = A_1$.
By~(\ref{endsInComponents}), there is a smallest $k \in \N$ such
that $0 \le k < n$, $B_k \subset C_0$, and $B_{k+1} \not \subset C_0$.
But by~(\ref{B_jAdjacencies}), $B_k \cup B_{k+1}$ is
$\kappa$-connected and therefore must be a
subset of $C_0$, contrary to our choice of $k$. It follows that
$A_0$ and $A_1$ are points of distinct components of $K(X,\kappa')$.
\end{proof}

\begin{prop}
\label{componentYieldsConnected}
Let $(X,\kappa)$ be a finite connected digital image. Then
$K(X,\kappa')$ is connected.
\end{prop}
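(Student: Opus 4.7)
The plan is to show that $X$ itself lies in the same $\kappa'$-component as every other member of $K(X,\kappa')$; since $X$ is finite and $\kappa$-connected by hypothesis, $X \in K(X,\kappa')$, and joining any two members via $X$ then suffices to prove connectedness.

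First, I would prove a \emph{growth lemma}: if $A \in K(X,\kappa')$ and $x \in X \setminus A$ is $\kappa$-adjacent to some point $a$ of $A$, then $A \cup \{x\} \in K(X,\kappa')$ and $A \adj_{\kappa'} A \cup \{x\}$. Connectedness of $A \cup \{x\}$ is immediate from concatenating a $\kappa$-path in $A$ with the single edge from $a$ to $x$. For the adjacency, I would unwind Definition~\ref{hyperAdjDef}: any $a' \in A$ can serve as its own witness by taking $b_0 = a' \in A \cup \{x\}$ and using $\adjeq$ as equality; any $b' \in A \cup \{x\}$ is either in $A$ (same trick, with $a_0 = b'$) or equals $x$, in which case the witness $a_0 \in A$ is the given neighbour $a$ of $x$.

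Second, I would show that whenever $A \in K(X,\kappa')$ with $A \subsetneq X$, such an $x$ exists. This is where the hypothesis that $X$ is $\kappa$-connected enters: any $\kappa$-path in $X$ from a point of $A$ to a point of $X \setminus A$ must contain a consecutive pair straddling $A$ and its complement, yielding the required $x$. Combined with the finiteness of $X$, iterating the growth lemma at most $\#X - \#A$ times yields a finite $\kappa'$-path $A = A_0, A_1, \ldots, A_n = X$ in $K(X,\kappa')$.

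Finally, given any $A, B \in K(X,\kappa')$, I would concatenate a $\kappa'$-path from $A$ to $X$ with the reverse of a $\kappa'$-path from $B$ to $X$, producing a $\kappa'$-path from $A$ to $B$ and hence connectedness of $K(X,\kappa')$. The main obstacle is the careful application of Definition~\ref{hyperAdjDef} in the growth lemma: both quantifier directions must be checked, and the point $x$ has to be handled as a distinguished element while the rest of $A \cup \{x\}$ is absorbed by the ``identity as $\adjeq$'' trick. Everything else — existence of the adjacent $x$ at each stage and the concatenation at the end — is routine bookkeeping about $\kappa$-connectedness and path composition.
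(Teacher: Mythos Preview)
Your proposal is correct and follows essentially the same approach as the paper: build a $\kappa'$-path from an arbitrary $A \in K(X,\kappa')$ to $X$ by adjoining one boundary point at a time, then conclude connectedness since every member is joined to $X$. Your version is more explicit in verifying the adjacency $A \adj_{\kappa'} A \cup \{x\}$ via Definition~\ref{hyperAdjDef} and in justifying the existence of the next point $x$, whereas the paper simply asserts these steps.
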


\begin{proof}
Let $A \in K(X,\kappa')$. We show there is a path in
$K(X,\kappa')$ from $A$ to $X$. If $A=X$, we are done. Otherwise,
since $X$ is connected, there are sequences
$\{x_i\}_{i=1}^m \subset X \setminus A$ and $\{A_j\}_{j=0}^m$
such that $A = A_0$, $A_{j+1} = A_j \cup \{x_{j+1}\}$, $A_{j+1}$
is connected, and $A_m = X$. Therefore, $A_j \adj_{\kappa'} A_{j+1}$.
Thus $\{A_j\}_{j=0}^m$ is a $\kappa'$-path in $K(X,\kappa')$ from
$A$ to $X$.

Since $A$ was arbitrarily chosen, it follows that $K(X,\kappa')$ is
connected.
\end{proof}

\begin{prop}
\label{DconnImpliesKDconn}
Let $D$ be a component of $(X,\kappa)$. Then
$K(D,\kappa')$ is a component of $K(X,\kappa')$.
\end{prop}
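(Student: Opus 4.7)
The plan is to establish two things: (a) $K(D,\kappa')$ is $\kappa'$-connected in $K(X,\kappa')$, and (b) no element of $K(X,\kappa')\setminus K(D,\kappa')$ lies in the same $\kappa'$-component as any element of $K(D,\kappa')$. Together these say $K(D,\kappa')$ is a component of $K(X,\kappa')$.

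For (a), I cannot directly invoke Proposition~\ref{componentYieldsConnected} on $D$ since $D$ may be infinite. Instead, I would argue as follows. Fix $A, B \in K(D,\kappa')$; both are finite connected subsets of $D$. Pick any $a\in A$ and $b\in B$; since $D$ is $\kappa$-connected, there is a $\kappa$-path $Q\subset D$ joining $a$ to $b$. Set $E = A\cup B \cup Q \subset D$. Then $E$ is a finite $\kappa$-connected subset of $D$, and both $A$ and $B$ lie in $K(E,\kappa')$. Applying Proposition~\ref{componentYieldsConnected} to the finite connected digital image $(E,\kappa)$ yields a $\kappa'$-path in $K(E,\kappa')$ from $A$ to $B$. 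Since $E\subset D$, this path lies in $K(D,\kappa')$ as well, proving connectedness.

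For (b), let $B \in K(X,\kappa')$ with $B\notin K(D,\kappa')$; equivalently, $B\not\subset D$. Because $B$ is $\kappa$-connected and $D$ is a component of $X$ (a maximal $\kappa$-connected set), any intersection of $B$ with $D$ would force $B\cup D$ to be $\kappa$-connected and hence $B\subset D$, contradicting the choice of $B$. So $B\cap D = \emptyset$, and $B$ is contained in some other component $C\neq D$ of $X$. For any $A\in K(D,\kappa')$, Proposition~\ref{disconnXimpliesdisconnKX} (with $C_0=D$, $C_1=C$) then shows that $A$ and $B$ lie in distinct components of $K(X,\kappa')$.

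Combining (a) and (b), $K(D,\kappa')$ is a maximal $\kappa'$-connected subset of $K(X,\kappa')$, i.e., a component. The main technical point is step (a), since the natural temptation is to quote Proposition~\ref{componentYieldsConnected} directly on $D$; the key observation is that to connect two \emph{finite} subsets $A, B$ in $K(D,\kappa')$, one only needs finitely much of $D$, obtained by adjoining a single path between representative points.
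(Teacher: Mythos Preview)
Your argument is correct and follows the same two-step structure as the paper: connectedness of $K(D,\kappa')$ via Proposition~\ref{componentYieldsConnected}, and maximality via Proposition~\ref{disconnXimpliesdisconnKX}. The paper's proof simply reads ``By Proposition~\ref{componentYieldsConnected}, $K(D,\kappa')$ is connected. The conclusion follows from Proposition~\ref{disconnXimpliesdisconnKX}.''

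Where you genuinely differ is in part~(a): the paper invokes Proposition~\ref{componentYieldsConnected} directly on $D$, but that proposition is stated only for \emph{finite} connected digital images, and nothing in the paper forces a component $D$ of $X$ to be finite. You correctly flag this and repair it by passing to the finite auxiliary set $E = A\cup B\cup Q$, to which Proposition~\ref{componentYieldsConnected} legitimately applies. So your version is actually more rigorous than the paper's own proof on this point, at the cost of one extra paragraph. Your part~(b) also spells out explicitly why an element $B\in K(X,\kappa')\setminus K(D,\kappa')$ must lie entirely inside a component $C\neq D$ (the paper leaves this implicit in its appeal to Proposition~\ref{disconnXimpliesdisconnKX}). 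Overall: same route, but you have closed a gap the paper glosses over.
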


\begin{proof}
By Proposition~\ref{componentYieldsConnected}, 
$K(D,\kappa')$ is connected.
The conclusion follows from
Proposition~\ref{disconnXimpliesdisconnKX}.
\end{proof}

\begin{thm}
\label{XconnIFFK(X)conn}
Let $(X,\kappa)$ be a digital image. Then $X$ is $\kappa$-connected
if and only if $K(X,\kappa')$ is $\kappa'$-connected.
\end{thm}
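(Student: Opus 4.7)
The plan is to establish each direction of the biconditional as a quick corollary of the preceding propositions; no new paths in the hyperspace need to be constructed beyond what those propositions already provide.

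For the forward implication, I would observe that if $X$ is $\kappa$-connected then $X$ is its own (unique) component. Applying Proposition~\ref{DconnImpliesKDconn} with $D = X$ then yields that $K(X,\kappa')$ is a component of $K(X,\kappa')$, hence is $\kappa'$-connected. Equivalently, one can simply cite Proposition~\ref{componentYieldsConnected} directly, since the elements of $K(X,\kappa')$ are finite connected subsets, and the argument of that proposition grows any $A \in K(X,\kappa')$ along a $\kappa$-path inside $X$ into a common ``larger'' connected finite subset, providing a $\kappa'$-path between any two members of $K(X,\kappa')$.

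For the converse, the key observation is that for every $x \in X$ the singleton $\{x\}$ lies in $K(X,\kappa')$, so $\bigcup_{Y \in K(X,\kappa')} Y$ recovers all of $X$. Assuming $K(X,\kappa')$ is nonempty and $\kappa'$-connected, Proposition~\ref{amalgConn} applied with $W = K(X,\kappa')$ immediately yields that this union, which equals $X$, is $\kappa$-connected.

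The main ``obstacle'' here is essentially nil: the theorem is a corollary packaging together Propositions~\ref{amalgConn} and~\ref{componentYieldsConnected} (or, for the forward direction, Proposition~\ref{DconnImpliesKDconn}). The only subtlety worth flagging is noticing that the collection of singletons already sits inside $K(X,\kappa')$ and has union exactly $X$, which is precisely what makes Proposition~\ref{amalgConn} applicable in a single step and avoids any need to reason about elements of $K(X,\kappa')$ more complicated than points.
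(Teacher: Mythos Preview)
Your forward direction is exactly the paper's: it too cites Proposition~\ref{DconnImpliesKDconn}. Your converse is correct but genuinely different. The paper argues by contrapositive via Proposition~\ref{disconnXimpliesdisconnKX}: if $X$ were disconnected it would have distinct components $C_0,C_1$, and then any $A_i \in K(C_i,\kappa')$ would lie in distinct components of $K(X,\kappa')$. You instead invoke Proposition~\ref{amalgConn} directly with $W = K(X,\kappa')$, using the observation that singletons already force $\bigcup_{Y\in K(X)} Y = X$. Your route is arguably cleaner, since it avoids the contrapositive detour and exploits a proposition the paper proved but did not use here; the paper's route, on the other hand, makes the component correspondence between $X$ and $K(X)$ more explicit. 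Either way the theorem is a short packaging of earlier results, as you note.
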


\begin{proof}
Suppose $(X, \kappa)$ is connected. By
Proposition~\ref{DconnImpliesKDconn}, $K(X,\kappa')$ is
$\kappa'$-connected.

Conversely, suppose $K(X,\kappa')$ is $\kappa'$-connected.
By Proposition~\ref{disconnXimpliesdisconnKX}, $(X,\kappa)$ must
be connected.
\end{proof}

\begin{lem}
\label{pathToSingleton}
Let $(X,\kappa)$ be a digital image. Let $A$ be a finite
member of $K(X,\kappa')$. Then there is a path ${\cal P}$
in $K(A,\kappa')$ from a singleton to $A$.
\end{lem}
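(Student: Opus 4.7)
The plan is to construct the path explicitly by growing $A$ one point at a time from a singleton, maintaining $\kappa$-connectedness of each intermediate set. Since every successive set then lies in $K(A,\kappa')$ and differs from its predecessor by exactly one added point, the adjacency condition of Definition~\ref{hyperAdjDef} should fall out with little effort.

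First, I would enumerate the points of $A$ as $a_1,a_2,\ldots,a_n$ in an order for which each prefix $A_k := \{a_1,\ldots,a_k\}$ is $\kappa$-connected. Pick $a_1 \in A$ arbitrarily; then $A_1$ is trivially connected. Inductively, suppose $A_k$ is connected and $k < n$. I claim that some point of $A \setminus A_k$ is $\kappa$-adjacent to a point of $A_k$: otherwise $A_k$ together with $A \setminus A_k$ would be a disconnection of $A$, contradicting $A \in K(X,\kappa')$. Choose any such point to be $a_{k+1}$; then $A_{k+1} = A_k \cup \{a_{k+1}\}$ is again $\kappa$-connected.

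Next, I would verify that $A_k \adj_{\kappa'} A_{k+1}$ for each $1 \le k < n$. The sets are distinct because $a_{k+1} \notin A_k$. For any $a \in A_k$, we have $a \in A_{k+1}$, so $a \adjeq_{\kappa} a$; and for any $b \in A_{k+1}$, either $b \in A_k$, in which case $b \adjeq_{\kappa} b$, or $b = a_{k+1}$, which by construction is $\kappa$-adjacent to some element of $A_k$. This matches the symmetric two-sided requirement of Definition~\ref{hyperAdjDef}, giving $A_k \adj_{\kappa'} A_{k+1}$.

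Finally, the sequence ${\cal P} = \{A_k\}_{k=1}^n$ is a $\kappa'$-path in $K(A,\kappa')$ from the singleton $\{a_1\}$ to $A_n = A$, as required. I do not anticipate a genuine obstacle; the one point requiring care is the inductive construction of the enumeration, where $\kappa$-connectedness of $A$ is precisely what guarantees that we can always append an adjacent point rather than being forced to jump to a disconnected piece.
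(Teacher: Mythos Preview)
Your proof is correct. The paper's own proof is a one-line appeal to Proposition~\ref{componentYieldsConnected} (applied with $A$ in the role of $X$), whose proof is exactly the ``grow one point at a time'' construction you carry out here; so you have reproduced the underlying argument directly rather than citing the earlier result.
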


\begin{proof}
Let $x_0 \in A$. By Proposition~\ref{componentYieldsConnected},
there is a path in $K(A,\kappa')$ from $\{\,x_0\,\}$ to $A$.
\end{proof}

Suppose $(X,\kappa)$ is a connected digital image. We say
$Y \subset X$ {\em disconnects} $(X,\kappa)$ if $X \setminus Y$
is not $\kappa$-connected.

\begin{thm}
Let $(X,\kappa)$ be a connected digital image. 
Let $Y \subset X$. Let 
\begin{equation}
\label{calYdef}
    {\cal Y} = \{ B \in K(X,\kappa') ~|~ B \cap Y \neq \emptyset \}.
\end{equation} 
If $Y$ disconnects $(X,\kappa)$ then
${\cal Y}$ disconnects $K(X,\kappa')$.
\end{thm}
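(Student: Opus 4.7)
The plan is to show that removing $\mathcal{Y}$ from $K(X,\kappa')$ leaves a disconnected graph, by identifying the remainder with the hyperspace of $X \setminus Y$ and invoking Proposition~\ref{disconnXimpliesdisconnKX}.

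First, I would observe the set-theoretic identity $K(X,\kappa') \setminus \mathcal{Y} = K(X \setminus Y, \kappa')$, since by definition~(\ref{calYdef}) a member $B$ of $K(X,\kappa')$ lies outside $\mathcal{Y}$ exactly when $B \cap Y = \emptyset$, which is the same as $B \subset X \setminus Y$; and the $\kappa$-connectedness of $B$ together with the adjacency $\kappa'$ depends only on $\kappa$, so this identification is an isomorphism of graphs, not merely of underlying sets.

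Next, since $Y$ disconnects $(X,\kappa)$, the subgraph $X \setminus Y$ has at least two distinct components $C_0$ and $C_1$. Pick any $x_i \in C_i$ and set $A_i = \{x_i\}$; then $A_i \in K(C_i, \kappa')$. Applying Proposition~\ref{disconnXimpliesdisconnKX} to the image $(X \setminus Y, \kappa)$, the singletons $A_0$ and $A_1$ lie in distinct components of $K(X \setminus Y, \kappa')$. Transporting this across the identification of the first step, $A_0$ and $A_1$ are in distinct components of $K(X,\kappa') \setminus \mathcal{Y}$, so $\mathcal{Y}$ disconnects $K(X,\kappa')$.

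There is no real obstacle here; the only subtlety worth spelling out is the graph-theoretic equality $K(X,\kappa') \setminus \mathcal{Y} = K(X \setminus Y, \kappa')$, which must be verified as an identity of adjacency structures (a $\kappa'$-path between two $Y$-avoiding connected sets in $K(X,\kappa')$ automatically lies entirely in $K(X \setminus Y, \kappa')$, and conversely) so that Proposition~\ref{disconnXimpliesdisconnKX} applies intrinsically to $X \setminus Y$. Once this is noted, the result follows immediately.
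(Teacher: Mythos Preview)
Your proof is correct and takes a cleaner, more modular route than the paper's. The paper argues directly by contradiction: assuming a $\kappa'$-path $\{B_j\}_{j=0}^n$ in $K(X,\kappa')\setminus\mathcal{Y}$ from $\{x_0\}$ to $\{x_1\}$, it threads together points $y_j,z_j\in B_j$ with $y_j\adjeq_\kappa z_{j+1}$ and connects them by paths inside each $B_j$ to produce a $\kappa$-path in $X$ from $x_0$ to $x_1$, which must then meet $Y$, contradicting $B_k\cap Y=\emptyset$. You instead make the structural observation that $K(X,\kappa')\setminus\mathcal{Y}=K(X\setminus Y,\kappa')$ as graphs and invoke Proposition~\ref{disconnXimpliesdisconnKX} directly. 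This is arguably preferable, since the paper's argument essentially reproves that proposition in situ; your reduction avoids the duplication.

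One small point of phrasing: your parenthetical ``a $\kappa'$-path between two $Y$-avoiding connected sets in $K(X,\kappa')$ automatically lies entirely in $K(X\setminus Y,\kappa')$'' is not literally true as written, since a path in $K(X,\kappa')$ between two such sets may pass through members meeting $Y$. What you actually need, and what is immediate, is that for $A,B\subset X\setminus Y$ the relation $A\adj_{\kappa'}B$ is the same whether read in $K(X)$ or in $K(X\setminus Y)$, so the induced subgraph $K(X,\kappa')\setminus\mathcal{Y}$ is identical to $K(X\setminus Y,\kappa')$. With that clarified, the appeal to Proposition~\ref{disconnXimpliesdisconnKX} is fully justified.
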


\begin{proof}
Suppose $Y$ disconnects $(X,\kappa)$. Then there are 
$x_0,x_1$ that are in distinct components of $X \setminus Y$.

Suppose ${\cal Y}$ fails to disconnect $K(X,\kappa')$. 
Then there exists a $\kappa'$-path
\begin{equation}
\label{pathInK(X)}
{\cal P} = \{B_j\}_{j=0}^n \subset K(X,\kappa') \setminus {\cal Y}
\end{equation}
from $\{x_0\}$ to $\{x_1\}$.
By Definition~\ref{hyperAdjDef}, there exist $y_j, z_j \in B_j$ such 
that $y_j \adjeq_{\kappa} z_{j+1}$ for $j < n$.
Since $B_j$ is connected, there are $\kappa$-paths
$P_0 \subset B_0$ from $x_0$ to $y_0$, $P_j \subset B_j$
from $z_j$ to $y_j$, and $P_n \subset B_n$ from $z_n$ to $x_1$.
Then $P = \bigcup_{j=0}^n P_j$ is a $\kappa$-path in 
$\bigcup_{j=0}^n B_j \subset X$ from $x_0$
to $x_1$. Since $Y$ disconnects $X$, we must have
$P \cap Y \neq \emptyset$. Hence for some $k$,
$B_k \cap Y \neq \emptyset$, contrary
to~(\ref{pathInK(X)}).
The contradiction establishes that
${\cal Y}$ disconnects $K(X,\kappa')$.
\end{proof}

\section{Multivalued functions and hyperspaces}
\label{multivaluedSec}
In this section, we examine relations between
various notions of continuous multivalued functions
between digital images, and hyperspaces of digital images.

\begin{definition}
\label{strongContDef}
A multivalued function 
$F: (X,\kappa) \multimap (Y,\lambda)$
\begin{itemize}
    \item has {\em strong continuity}
          {\rm \cite{Tsaur}} if for each pair of 
adjacent $x, y \in X$, every point of $F(x)$
is adjacent or equal to some point
of $F(y)$ and every point of $F(y)$ is adjacent or equal to
some point of $F(x)$;
    \item has {\em weak continuity} 
          {\rm \cite{Tsaur}} if for each pair of 
          adjacent $x, y \in X$, $F(x)$ and $F(y)$
          are adjacent sets in $Y$, i.e., there
          exist $a \in F(x)$, $b \in F(y)$ such that
          $a \adjeq_{\lambda} b$;
    \item is {\em connectivity preserving} 
          {\rm \cite{Kovalevsky}} if 
          $F(A) \subset Y$ is connected whenever 
          $A \subset X$ is connected;
    \item is {\em continuous}~\cite{EGS1,EGS2}
          if $X \subset \Z^n$,
          $\kappa = c_u$ for $1 \le u \le n$, and
          $F$ is {\em generated}
          by a continuous function $f: S(X,r) \to Y$
          for some positive integer $r$; where
          $S(X,r) = \bigcup_{x \in X} S(\{x\},r)$, where
          for $x = (x_1, \ldots, x_n)$, $S(\{x\},r)$
          is the set of all points
          $(y_1,\ldots,y_n)$ such that for each index~$i$
          we have $y_i = x_i + k_i/r$ for some
          integer $k_i$ such that $0 \le k_i < r$; 
          $S(X,r)$ inherits $c_u$ in the sense that
          $(y_1,\ldots,y_n) \adjeq_{c_u} (a_1,\ldots,a_n)$ 
          in $S(X,r)$ if for at most $u$ indices~$i$,
          $|y_i - a_i|=1/r$ and for all other indices~$j$,
          $y_j=a_j$; and ``$F$ is {\em generated}
          by $f$" means for all $x \in X$,
          $F(x) = \bigcup_{y \in S(\{x\},r)} \{f(y)\}$.
\end{itemize}
\end{definition}

We have the following.

\begin{thm}
\label{strongContInduces}
Let $F: (X,\kappa) \multimap (Y,\lambda)$ be a
strongly continuous multifunction between digital
images. Then the function 
$F_*: (2^X, \kappa') \to (2^Y, \lambda')$ 
defined by $F_*(A) = F(A)$ is continuous.
\end{thm}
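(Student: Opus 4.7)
The plan is to apply Theorem~\ref{continuityPreserveAdj}: it suffices to verify that whenever $A \adj_{\kappa'} B$ in $2^X$, the images satisfy $F_*(A) \adjeq_{\lambda'} F_*(B)$ in $2^Y$. Unpacking Definition~\ref{hyperAdjDef} on both sides, this amounts to showing that for every $y \in F(A)$ there is some $y' \in F(B)$ with $y \adjeq_{\lambda} y'$, and symmetrically for points of $F(B)$.

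First I would pick an arbitrary $y \in F(A) = \bigcup_{a \in A} F(a)$ and choose $a \in A$ with $y \in F(a)$. Using $A \adj_{\kappa'} B$, Definition~\ref{hyperAdjDef} supplies some $b_0 \in B$ with $a \adjeq_{\kappa} b_0$. Then I would split on whether $a = b_0$ or $a \adj_{\kappa} b_0$: in the former case $F(a) = F(b_0) \subseteq F(B)$, so $y$ itself witnesses the adjacency; in the latter case, strong continuity (Definition~\ref{strongContDef}) provides a point $y' \in F(b_0) \subseteq F(B)$ with $y \adjeq_{\lambda} y'$. The symmetric argument, starting from an arbitrary $y \in F(B)$ and using the half of Definition~\ref{hyperAdjDef} that supplies $a_0 \in A$ with $b \adjeq_{\kappa} a_0$, gives the matching point in $F(A)$. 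Together these verify $F_*(A) \adjeq_{\lambda'} F_*(B)$.

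One minor obstacle worth being explicit about is that $F_*$ must be shown to actually land in $2^Y$, i.e.\ that $F(A)$ is nonempty and finite whenever $A \in 2^X$ is; nonemptiness is immediate (assuming, as is standard, that $F(x) \neq \emptyset$ for each $x$), and finiteness follows because $F(A) = \bigcup_{a \in A} F(a)$ is a finite union of finite sets provided each $F(a)$ is finite, which is built into the multivalued-function setup on finite digital images. The genuinely substantive content is just the two-sided ``point-chasing'' above, and strong continuity is tailored precisely for this: it is the one notion among those in Definition~\ref{strongContDef} that guarantees the pointwise back-and-forth condition needed by $\kappa'$, so the argument does not extend verbatim to weak continuity or connectivity preservation.
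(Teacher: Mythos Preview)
Your proof is correct and follows essentially the same approach as the paper's own argument: both reduce to checking Definition~\ref{hyperAdjDef} on images by picking an arbitrary element of $F(A)$, locating a preimage, using the $\kappa'$-adjacency of $A$ and $B$ to find an adjacent preimage in the other set, and then invoking strong continuity. Your version is somewhat more explicit than the paper's (in particular the case split on $a=b_0$ versus $a \adj_{\kappa} b_0$, and the remark about $F_*$ landing in $2^Y$), but the substance is identical.
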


\begin{proof}
Let $A_0 \adj_{\kappa'} A_1$ in $2^X$. We must show
that $F(A_0) \adjeq_{\lambda'} F(A_1)$ in $2^Y$.

Let $x \in A_0$, $y \in A_1$ such that
$x \adjeq_{\kappa} y$. By Definition~\ref{strongContDef},
for every $p \in F(x)$ there exists $q \in F(y)$
such that $p \adjeq_{\lambda} q$. Similarly, given
$u \in A_1$, $v \in A_0$ such that $u \adj_{\kappa} v$,
for every $r \in F(u)$ there exists $s \in F(v)$ such
that $r \adjeq_{\lambda} s$. The assertion follows.
\end{proof}

The following shows that in substituting
weak continuity, continuity, or connectivity-preserving
for strong continuity, we fail to
obtain a result analogous to
Theorem~\ref{strongContInduces}.

\begin{exl}
Let $F: ([0,1]_{\Z},c_1) \multimap ([0,2]_{\Z},c_1)$
be defined by $F(0)=\{0\}$, $F(1) = \{1,2\}$. Then $F$ has
weak $c_1$-continuity, is $c_1$-continuous, and is
$c_1$-connectivity-preserving, but since $2 \in F(1)$
has no $c_1$-neighbor in $F(0)$, the induced function
$F_*: (2^X, c_1') \to (2^Y, c_1')$ is not
$(c_1',c_1')$-continuous.
\end{exl}

\section{Cycles and Girth}
\label{cycleSec}
The reader is reminded that:
\begin{itemize}
    \item a point in $2^X$ is a nonempty subset of $X$;
    \item a cycle in $X$ is a closed path of at least 3 distinct
          points in which no node repeats, but in which a point $x$ 
          can be adjacent to points distinct from the 
          predecessor and successor of $x$ in the path (the 
          cycle does not need to be chordless).
\end{itemize}

\begin{prop}
\label{cyclesIFF}
Let $(X,\kappa)$ be a digital image. Then $K(X,\kappa')$
has a 3-cycle if and only if $(X,\kappa)$ has a non-isolated point.
\end{prop}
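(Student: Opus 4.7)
The plan is to prove both directions by exhibiting or ruling out a very small, explicit 3-cycle built from singletons and a single two-point set.

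For the forward direction I would argue the contrapositive: suppose every point of $X$ is isolated in $(X,\kappa)$. Then no two distinct points of $X$ are $\kappa$-adjacent, and so the only $\kappa$-connected nonempty finite subsets are singletons. Thus $K(X,\kappa')$ consists entirely of singletons $\{a\}$. For any two distinct singletons $\{a\}\neq\{b\}$ in $K(X,\kappa')$, the definition of $\kappa'$-adjacency (Definition \ref{hyperAdjDef}) forces $a \adjeq_\kappa b$, and since $a\neq b$ this would mean $a \adj_\kappa b$, contradicting that $a$ is isolated. Hence $K(X,\kappa')$ has no $\kappa'$-edges at all, so certainly no 3-cycle.

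For the backward direction, suppose $x \in X$ is non-isolated, so there is some $y \in X$ with $x \adj_\kappa y$. I would take the three candidates
\[
A_1=\{x\},\qquad A_2=\{y\},\qquad A_3=\{x,y\},
\]
which are pairwise distinct members of $K(X,\kappa')$ (the last is $\kappa$-connected precisely because $x \adj_\kappa y$). Then it is a direct check against Definition \ref{hyperAdjDef} that each pair $A_i,A_j$ with $i\neq j$ is $\kappa'$-adjacent: in each case the only points appearing are $x$ and $y$, and for every point in one set we can take either itself (if it lies in the other set) or its $\kappa$-neighbor from $\{x,y\}$ as the witnessing point. This triangle $A_1 \adj_{\kappa'} A_2 \adj_{\kappa'} A_3 \adj_{\kappa'} A_1$ is the required 3-cycle.

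I do not expect a serious obstacle here; the only subtlety is just bookkeeping the asymmetric ``for all $a\in A$, exists $b_0 \in B$'' quantifiers in Definition \ref{hyperAdjDef} for each of the three pairs, and noting that using $\adjeq$ (not $\adj$) allows a point in $A_3$ to witness itself in $A_3$ when we compare $A_3$ with a singleton $\{x\}$ or $\{y\}$. The forward direction then follows from the observation that isolation of every point kills all edges of the hyperspace, a fact that is essentially immediate from the definitions.
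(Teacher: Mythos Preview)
Your proof is correct and takes essentially the same approach as the paper: the same explicit 3-cycle $\{x\},\{y\},\{x,y\}$ for the backward direction, and the observation that all-isolated points leave only singletons with no $\kappa'$-edges for the forward direction. The paper simply calls the forward direction ``elementary'' without spelling out the details you supply.
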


\begin{proof}
It is elementary that if the points of $(X,\kappa)$ are all isolated,
then $K(X,\kappa')$ has no cycle.

Suppose $x \in X$ is not isolated in $(X,\kappa)$. Then there exists
$y \in X$ such that $x \adj_{\kappa} y$. Then
$\left \{ \{x\}, \{x,y\}, \{y\} \right \}$ is a 3-cycle in
$K(X,\kappa')$.
\end{proof}

The {\em girth} of a graph $(X,\kappa)$ is variously described in
the literature as the length of a shortest or of
a longest~\cite{Berge} cycle in $(X,\kappa)$. We may 
distinguish these concepts as {\em girth} and {\em Girth}, respectively. In light of Proposition~\ref{cyclesIFF}, the 
Girth is more interesting, so in the following we focus on Girth.

\begin{exl}
\label{6cycle}
If $(X,\kappa)$ is a digital image and $x \in X$ such that
$N(X,x, \kappa)$ has distinct points $u$ and $v$ that are
not $\kappa$-adjacent, then
$K(X,\kappa')$ has Girth of at least 6.
\end{exl}

\begin{proof}
By hypothesis, there exist distinct $u,v \in N(X,x,\kappa)$.
Then by Definition~\ref{hyperAdjDef}, $K(X,\kappa')$ has a 6-cycle
\[ \{u\}, \{u,x\}, \{u,x,v\}, \{x,v\}, \{v\}, \{x\}.
\]
\end{proof}

\begin{exl}
The Girth of $(2^{[1,4]_{\Z}}, c_1')$ is 15, which is equal
to $\#(2^{[1,4]_{\Z}}, c_1')$. 
I.e., $(2^{[1,4]_{\Z}}, c_1')$ has
a cycle containing all members of $(2^{[1,4]_{\Z}}, c_1')$.
\end{exl}

\begin{proof}
It is easy to see that the following sequence of the 15
distinct members of $(2^{[1,4]_{\Z}}, c_1')$ is a
$c_1'$-cycle.
\[ \{1,2\}, \{1,2,3\}, \{1,3\}, \{1,4\}, \{1,3,4\}, 
   \{1,2,4\}, \{1,2,3,4\}, \{2,3,4\}, \{2,3\}, 
\]
\[ \{2,4\}, \{3,4\}, \{4\}, \{3\}, \{2\}, \{1\}
\]
\end{proof}

\section{Dominating set}
A subset $D$ of a graph $(X,\kappa)$ is a 
{\em dominating set} for, or {\em dominates},
$(X,\kappa)$, if given $x \in X$ there exists
$d \in D$ such that $d \adjeq_{\kappa} x$.

\label{dominateSec}
\begin{thm}
Let $(X,\kappa)$ be a digital image and let $D \subset X$. Let
\[ {\cal D} = \{A \in 2^X ~|~ A \cap D \neq \emptyset\}
\]
Then $D$ dominates $(X,\kappa)$ if and only if
${\cal D}$ dominates $(2^X, \kappa')$.
\end{thm}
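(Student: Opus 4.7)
The plan is to prove both implications directly from Definition~\ref{hyperAdjDef}, with the singletons $\{x\}$ serving as the bridge between domination in $X$ and domination in $2^X$.

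For the forward direction, assume $D$ dominates $(X,\kappa)$ and let $A \in 2^X$ be arbitrary. If $A \cap D \neq \emptyset$, then $A \in {\cal D}$ already, so $A$ is dominated by itself. Otherwise, I would pick any $a \in A$, use the hypothesis to choose $d \in D$ with $d \adjeq_{\kappa} a$ (so in fact $d \adj_{\kappa} a$ because $a \notin D$), and set $B = A \cup \{d\}$. Then $B \cap D \neq \emptyset$, so $B \in {\cal D}$, and I would verify $B \adj_{\kappa'} A$ via Definition~\ref{hyperAdjDef}: for any $a' \in A$ take $b_0 = a' \in B$ (so $a' \adjeq_{\kappa} b_0$); for any $b \in B$, either $b \in A$ and we take $a_0 = b$, or $b = d$ and we take $a_0 = a$.

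For the converse, assume ${\cal D}$ dominates $(2^X,\kappa')$ and let $x \in X$. Applied to the singleton $\{x\} \in 2^X$, the hypothesis furnishes some $B \in {\cal D}$ with $B \adjeq_{\kappa'} \{x\}$. If $B = \{x\}$, then $B \cap D \neq \emptyset$ forces $x \in D$, which dominates itself. Otherwise $B \adj_{\kappa'} \{x\}$, and since $B \cap D \neq \emptyset$ I can pick $d \in B \cap D$; applying Definition~\ref{hyperAdjDef} to this $d \in B$ produces $a_0 \in \{x\}$ (necessarily $a_0 = x$) with $d \adjeq_{\kappa} x$, so $d$ dominates $x$.

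The only mild obstacle is bookkeeping around the condition ``$A \neq B$'' implicit in Definition~\ref{hyperAdjDef}: I need to separate the trivial case $A \in {\cal D}$ (where $A$ dominates itself) from the case where a genuine adjacency is required, and in the reverse direction to make sure the chosen dominator of $\{x\}$ is handled correctly whether it equals $\{x\}$ or is strictly $\kappa'$-adjacent to it. Beyond that, everything reduces to a direct verification of the adjacency conditions and should be routine.
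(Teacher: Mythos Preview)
Your proposal is correct and follows essentially the same approach as the paper: in the forward direction you adjoin a dominating element of $D$ to an arbitrary $A$ to produce a $\kappa'$-neighbor in ${\cal D}$, and in the converse you apply the hypothesis to singletons $\{x\}$. The only difference is that you are more explicit than the paper about the bookkeeping cases (whether $A$ already lies in ${\cal D}$, whether the dominator of $\{x\}$ equals $\{x\}$), which the paper absorbs into its use of $\adjeq_{\kappa'}$.
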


\begin{proof}
Suppose $D$ dominates $(X,\kappa)$. Let $x \in A \in 2^X$.
There exists $y \in D$ such that $x \adjeq_{\kappa} y$.
It follows from Definition~\ref{hyperAdjDef} that
\[ A' = A \cup \{y\} \adjeq_{\kappa'} A.
\]
Since $A$ is arbitrary and $A' \in {\cal D}$,
it follows that ${\cal D}$ dominates
$(2^X, \kappa')$.

Suppose ${\cal D}$ dominates $(2^X, \kappa')$.
Let $x \in X$. Then there exists 
$A \in {\cal D}$ such
that $A \adjeq_{\kappa'} \{x\}$. Therefore, for all $a \in A$
we have $a \adjeq_{\kappa} x$. Since there exists
$d \in A \cap D$, $d \adjeq_{\kappa} x$. Thus, $D$
dominates $X$.
\end{proof}

\section{Diameter}
\label{diamSec}
\begin{definition}
{\rm \cite{Han05}}
Let $(X,\kappa)$ be a connected graph. The 
{\em shortest path metric} for $(X,\kappa)$ is
\[ d_{\ell}(x,y) = \min\{length(P) ~|~ P 
   \mbox{ is a $\kappa$-path in $X$ from $x$ to $y$}\},
   \mbox{ for } x,y \in X.
\]
\end{definition}

\begin{definition}
The {\em diameter} of a finite connected graph $(X,\kappa)$
is
\[ diam(X,\kappa) = \max\{d_{\ell}(x,y) ~|~ x,y \in X\}.
\]
\end{definition}

\begin{definition}
{\rm \cite{Berge}}
Let $(X,\kappa)$ be a connected digital image. 
For $x \in X$, the
{\em associated number $e(x)$ of $x$} is
\[ e(x) = \max \{d_{\ell}(x,y) ~|~ y \in X\}.
\]
A {\em center} of $(X,\kappa)$
is a point $x_0 \in X$ such that
\[ e(x_0) = \min \{e(x) ~|~ x \in X\}.
\]
The associated number of the center is the
{\em radius} of $(X,\kappa)$.
\end{definition}

We have the following.

\begin{thm}
Let $(X,\kappa)$ be a finite connected digital image with
radius $r$. Let $\#X = n$. Then
$diam(K(X,\kappa')) < 2(n + r - 1)$.
\end{thm}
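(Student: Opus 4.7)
The plan is to construct, for arbitrary $A,B \in K(X,\kappa')$, an explicit $\kappa'$-path from $A$ to $B$ that routes through a center $x_0$ of $(X,\kappa)$, and to read off its length. I would use four stages: (i) shrink $A$ to some singleton $\{a\}$ with $a\in A$; (ii) slide $\{a\}$ through singletons to $\{x_0\}$; (iii) slide $\{x_0\}$ through singletons to some $\{b\}$ with $b\in B$; (iv) grow $\{b\}$ up to $B$.

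The key elementary observation for stages (i) and (iv) is that if $S' \in K(X,\kappa')$ and $\ell \in S'$ is a leaf of some spanning tree of the subgraph induced on $S'$, then $S := S' \setminus \{\ell\}$ is still $\kappa$-connected and satisfies $S \adj_{\kappa'} S'$: checking Definition~\ref{hyperAdjDef}, each $s \in S$ is matched with itself inside $S'$, and $\ell$ is matched with its spanning-tree parent in $S$. Iterating, $A$ shrinks to $\{a\}$ in exactly $\#A - 1$ steps, and dually $\{b\}$ grows to $B$ in $\#B - 1$ steps. For stages (ii) and (iii), note that $\{u\} \adj_{\kappa'} \{v\}$ whenever $u \adj_{\kappa} v$, since the only choices of matching witnesses for singletons force $u \adjeq_{\kappa} v$. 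Because $e(x_0)=r$, a shortest $\kappa$-path in $X$ from $a$ to $x_0$ has length at most $r$, and lifts to a $\kappa'$-path of singletons of the same length; likewise from $x_0$ to $b$.

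Concatenating the four stages gives
\[
 d_{\ell}(A,B) \le (\#A - 1) + r + r + (\#B - 1) = \#A + \#B + 2r - 2 \le 2(n+r-1).
\]
The strict inequality is then obtained by cases: if $\#A + \#B = 2n$ then $A = B = X$, so $d_{\ell}(A,B) = 0 < 2(n+r-1)$ (using $n \ge 2$, whence $r \ge 1$); otherwise $\#A + \#B \le 2n - 1$, which already yields $d_{\ell}(A,B) \le 2n + 2r - 3 < 2(n+r-1)$. The only subtle point is the leaf-removal $\kappa'$-adjacency check and its dual, but this is routine bookkeeping against Definition~\ref{hyperAdjDef}, so no substantive obstacle is expected.
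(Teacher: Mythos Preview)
Your proof is correct and takes essentially the same route as the paper: both constructions pass through a center $x_0$ via a $\kappa'$-path of singletons of length at most $2r$, shrink/grow each endpoint one vertex at a time (the paper phrases this as successively adjoining a $\kappa$-adjacent point rather than deleting a spanning-tree leaf, but these are the same operation run in reverse), and finish with the identical case split on whether both sets equal $X$ to upgrade $\le$ to $<$. Your explicit appeal to $n\ge 2$ is not in the hypothesis, but the paper's proof tacitly needs it as well, since the strict inequality fails when $n=1$.
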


\begin{proof}
Let $x_0$ be a center of $(X,\kappa)$. Let
$A_0,A_1 \in K(X,\kappa')$. Let $y_0 \in A_0$,
$y_1 \in A_1$. By assumption, there are
paths $P_i$ of length at most $r$ from $x_0$ to
$y_i$. Thus, $P_0 \cup P_1$ is a $\kappa$-path 
in $X$ of length at most
$2r$ from $y_0$ to $y_1$. It follows from
Definition~\ref{hyperAdjDef} that
${\cal P} = \{\{p\} ~|~ p \in P_0 \cup P_1\}$ is a 
$\kappa'$-path in $K(X)$ of length at most
$2r$ from $\{y_0\}$ to $\{y_1\}$.

Let $Q_0 = \{y_0\}$. We argue inductively as follows.
Suppose we have $Q_k \in K(X,\kappa')$ such that
$Q_k \subset A_0$. If $Q_k \neq A_0$,
then since $A_0$ is connected, there exists
$q \in A_0 \setminus Q_k$ such that for some
$q' \in Q_k$, $q \adj_{\kappa} q'$. By
Definition~\ref{hyperAdjDef}, we have
\[ Q_{k+1} = Q_k \cup \{q'\} \adj_{\kappa'}
          Q_k.
\]
Since $Q_{\#A_0 - 1} = A_0$, the set
${\cal P}_0 = \{Q_j\}_{j=0}^{\#A_0-1}$
is a path in $K(X,\kappa')$ of length $\#A_0 - 1$ from 
$\{y_0\}$ to $A_0$; equivalently, from $A_0$ to $\{y_0\}$.

Similarly, we can construct a path ${\cal P}_1$
in $K(X,\kappa')$ of length $\#A_1 - 1$ from 
$\{y_1\}$ to $A_1$. Therefore,
${\cal P}_0 \cup {\cal P} \cup {\cal P}_1$
is a path in $K(X,\kappa')$ of length at most
\[ \#A_0 - 1 + 2r + \#A_1 - 1 \le 2(n+r-1)
\]
from $A_0$ to $A_1$.
Further, we may assume $\min \{\#A_0, \#A_1\} < n$;
since otherwise $A_0 = X = A_1$, so there is a path
of length 0 from $A_0$ to $A_1$ in $K(X,\kappa')$. It
follows that for any $A_0, A_1 \in K(X,\kappa')$ there is
a path in $K(X,\kappa')$ from $A_0$ to $A_1$ of length
less than $2(n+r-1)$. The assertion follows.
\end{proof}

\section{Further remarks}
\label{concludeSec}
We have introduced into digital topology the study of hyperspaces
of digital images, and have taken a somewhat different 
approach to function graphs than that introduced
in~\cite{LuptonEtal}. We have studied some
relations between digital hyperspaces and
digital function graphs. We have examined a number of
properties of digital hyperspaces concerning cardinality, continuous maps and homotopy, connectivity,
cycles and Girth, dominating sets, and diameters.

Suggestions from anonymous reviewers are gratefully acknowledged.

\section{Declarations}
This research was not supported by a grant, nor by any
organizational funding. The author has no relevant financial
or non-financial interests to disclose.


\begin{thebibliography}{99}
\bibitem{Berge}
C. Berge, {\em Graphs and Hypergraphs},
North-Holland, Amsterdam, 1973


\bibitem{Borsuk54}
K. Borsuk,
On some metrizations of the hyperspace of compact sets,
{\em Fundamenta Mathematicae} 41 (1954), 168-202 


\bibitem{Borsuk67a}
K. Borsuk,
Concerning the set of retractions,
{\em Colloquium Mathematicum} 18 (1967), 197 - 201

\bibitem{Borsuk77}
K. Borsuk,
On a metrization of the hyperspace of a metric space,
{\em Fundamenta Mathematicae} 94 (1977), 191 - 207

\bibitem{Boxer80}
L. Boxer,
Retraction spaces and the homotopy metric,
{\em Topology and its Applications} 11 (1980), 17 - 29

\bibitem{Boxer81}
L. Boxer,
AANR's and ARI maps,
{\em Topology Proceedings} 6 (1981), 219 - 226 

\bibitem{Boxer83}
L. Boxer,
Hyperspaces where convergence to a calm limit implies
eventual shape equivalence,
{\em Fundamenta Mathematicae} 115 (1983), 213 - 222

\bibitem{Boxer83b}
L. Boxer,
Maps related to calmness,
{\em Topology and its Applications} 15 (1983), 11 - 17

\bibitem{Boxer93}
L. Boxer, Computing deviations from convexity in polygons, 
{\em Pattern Recognition Letters} 14 (1993), 163-167 

\bibitem{Boxer94}
L. Boxer,
Digitally continuous functions,
{\em Pattern Recognition Letters} 15 (1994), 833-839.

\bibitem{Bx97}
L. Boxer,
On Hausdorff-like metrics for fuzzy sets,
{\em Pattern Recognition Letters} 18 (1997), 115 - 118.
 Corrigenda: {\em Pattern Recognition Letters}
 18 (1997), 505-506

\bibitem{Boxer99}
L. Boxer,
A classical construction for the digital fundamental group,
{\em Journal of Mathematical Imaging and Vision} 10 (1999), 51-62 



 

\bibitem{BxBeyond}
L. Boxer, Beyond the Hausdorff metric in digital topology, 
{\em Applied General Topology} 23 (1) (2022), 69-77
 
 
 \bibitem{BxMi}
 L. Boxer and R. Miller,
 Coarse grained gather and scatter operations
 with applications,
 {\em Journal of Parallel and Distributed Computing} 64 (2004), 1297-1320 




\bibitem{BxSt20}
L. Boxer and P.C. Staecker,
Fixed point sets in digital topology, 1,
{\em Applied Digital Topology} 21 (1) (2020), 87-110

\bibitem{Cerin79}
Z. $\check{C}$erin,
$\cal{C}$-calmly regular convergence,
{\em Topology Proceedings} 4 (1979), 29 - 49

\bibitem{Cerin80}
Z. $\check{C}$erin,
$\cal{C}$-regularly movable convergence,
{\em Houston Journal of Mathematics} 6 (4) (1980), 471 - 490

\bibitem{CR96}
B.B. Chaudhuri and A. Rosenfeld,
On a metric distance between fuzzy sets,
{\em Pattern Recognition Letters} 17 (11) (1996), 1157 - 1160

\bibitem{Chen94}
L. Chen, Gradually varied surfaces and its optimal 
uniform approximation, 
{\em SPIE Proceedings} 2182 (1994), 300-307. 

\bibitem{Chen04}
L. Chen, {\em Discrete Surfaces and Manifolds}, Scientific Practical Computing, 
Rockville, MD, 2004


\bibitem{EGS1}
C. Escribano, A. Giraldo, and M. Sastre,
Digitally continuous multivalued functions,
in {\em Discrete Geometry for Computer Imagery.
Lecture Notes in Computer Science}, vol. 4992, pp. 81 - 92,
Springer, Berlin (2008)

\bibitem{EGS2}
C. Escribano, A. Giraldo, and M. Sastre, 
Digitally continuous multivalued
functions, morphological operations and thinning algorithms,
{\em Journal of Mathematical Imaging and Vision} 42 
(2012), 76 - 91 

\bibitem{Han05}
S-E Han,
Non-product property of the digital fundamental group,
{\em Information Sciences} 171 (2005), 73-91 


\bibitem{Khalimsky}
E. Khalimsky,
Motion, deformation, and homotopy in finite spaces, in
{\em Proceedings IEEE International Conference on Systems, Man, and Cybernetics},
IEEE International Conference on Systems, Man, and Cybernetics,
1987, 227-234.

\bibitem{Kovalevsky}
V.A. Kovalevsky, 
A new concept for digital geometry, 
in {\em Shape in Picture}, Springer, New York (1994)

\bibitem{LuptonEtal}
G. Lupton, J. Oprea, and N.A. Scoville,
Homotopy theory in digital topology,
{\em Discrete $\&$ Computational Geometry}
67 (1) (2022), 112 - 165

\bibitem{Nadler}
S.B. Nadler, Jr., 
{\em Hyperspaces of Sets},
Marcel Dekker, New York, 1978

\bibitem{Rosenfeld}
A. Rosenfeld,
`Continuous' functions on digital images,
{\em Pattern Recognition Letters} 4 (1987), 177-184 

\bibitem{Shon89}
R. Shonkwiler,
An image algorithm for computing the Hausdorff
distance efficiently in linear time,
{\em Information Processing Letters} 30 (2) (1989), 87-89 

\bibitem{Staecker}
P.C. Staecker, 
Digital homotopy relations and digital homology theories,
{\em Applied General Topology} 22 (2) (2021), 223-250


\bibitem{Tsaur}
R. Tsaur and M Smyth,
`Continuous' multifunctions in discrete
spaces with applications to fixed point theory. 
In: G. Bertrand, A. Imiya, and R. Klette, eds.,
{\em Digital and Image Geometry, Lecture
Notes in Computer Science}, vol. 2243, 
pp 151-162. Springer,
Berlin (2001), doi:10.1007/3-540-45576-05

\bibitem{Vergili20}
T. Vergili,
Digital Hausdorff distance on a connected digital image,
{\em Communications Faculty of Sciences University of Ankara Series A1 Mathematics and Statistics} 69 (2) 
(2020), 76-88 

\bibitem{Wagner71}
N. Wagner,
The space of retractions of the 2-sphere and the annulus,
{\em Transactions of the American Mathematical Society}
158 (1971), 319 - 329

\bibitem{ZeeB}
E.C. Zeeman and O.P. Buneman,
Tolerance spaces and the brain, in
{\em The Origin of Life: Toward a Theoretical Biology},
v. 1, C.H. Waddington, ed.,
Routledge, New York, 2017,
140 - 151

\end{thebibliography}
\end{document}